\numberwithin{equation}{section}
\theoremstyle{plain}
\newtheorem{theorem}{Theorem}[section]
\newtheorem*{theorem*}{Theorem}
\newtheorem{lemma}[theorem]{Lemma}
\newtheorem{corollary}[theorem]{Corollary}
\newtheorem{proposition}[theorem]{Proposition}
\theoremstyle{definition}
\newtheorem{definition}[theorem]{Definition}
\theoremstyle{remark}
\newtheorem{remark}[theorem]{Remark}
\newtheorem{case[theorem]}{Case}
\newcommand{\Slim}[1]{\sum\limits_{#1}}
\newcommand{\Norm}[1]{\left\lvert\left\lvert {#1} \right\rvert\right\rvert}
\newcommand{\Abs}[1]{\left\lvert{#1}\right\rvert}
\newcommand{\Field}[1]{\mathbb{F}_{#1}}
\title[Square-Root Cancellation, Dot Products, and Finite Rings]{Square-Root Cancellation, Averages over Hyperplanes, and the Structure of Finite Rings}
\author{Nathaniel Kingsbury-Neuschotz}
\date{\today}
\begin{document}
\begin{abstract}
We formulate a notion of square-root cancellation for the operator which sums a mean-zero function over a rotating hyperplane in $R^d$, where $R$ is a  possibly noncommutative finite ring. Using an argument due to Hart, Iosevich, Koh, and Rudnev, we show that this square-root cancellation occurs uniformly when $R$ is a finite field. We then show that this square-root cancellation cannot occur uniformly over families of finite rings which are not eventually finite fields. This extends an earlier result of the author to a non-translation-invariant operator.

\end{abstract}
\maketitle
\section{Introduction}
The square-root law is a useful and ubiquitous heuristic which predicts the following type of bound:
$$\left|\sum\text{oscillating terms of modulus 1}\right|\leq C\sqrt{\text{\# of terms}}.$$
This heuristic arises all over analysis. For example, with high probability a one-dimensional discrete random walk will be within distance $C\sqrt{N}$ of the origin after $N$ steps. If one takes a random subset $S$ of $(\mathbb{Z}/N\mathbb{Z})^d$, then its Fourier transform will satisfy a square-root bound up to a logarithmic factor (\cite{Babai} theorem 5.14). For a number-theoretic manifestation, the Riemann hypothesis is equivalent to the statement that 
$$\Slim{n\leq N}\mu(n)=O(N^{\frac{1}{2}+\epsilon})$$
for all $\epsilon$ greater than $0$, where $\mu$ is the M\"obius function; for a more intuitive interpretation, in this equivalence the M\"obius function $\mu$ may be replaced by Liouville's function $\lambda$, so that the Riemann hypothesis roughly states that the parity of the number of prime factors of the integers behaves like a sequence of independent unbiased Bernoulli trials. The square-root law has many further manifestations in number theory; the interested reader should see Mazur's wonderful survey \cite{Mazur}.

In this paper, we will work with an operator-theoretic formulation of the square-root law. Suppose that $A$ is an operator which sums a \textit{mean-zero} function $f(x)$ over a set $S = S(x)$ of dimension $d_S$ not dependent on $x$ within the vector space $\Field{q}^d$. The square-root law predicts that ``most'' such operators should obey a bound of the form 
$$||Af||\leq Cq^{d_S/2}||f||.$$
For example, if $S(x) = \{y|x-y\in S_0\} = x-S_0$ for a fixed set $S_0$ of dimension $d_S$ satisfying the Fourier decay condition\footnote{See the opening of section 2 for our formulation of the Fourier transform} $\widehat{S_0}(\chi)\leq Cq^{-d}|S_0|^\frac{1}{2}$ whenever $\chi$ is a nontrivial character, then convolution, Parseval, mean-zero-ness of $f$, and the Fourier decay condition give that $||Af||_2 = ||S_0*f||_2 = \leq Cq^\frac{d_S}{2}||f||_2.$ According to standard terminology, when $S_0$ satisfies the bound $\widehat{S_0}(\chi)\leq Cq^{-d}|S_0|^\frac{1}{2}$ for $\chi$ a nontrivial character, $S_0$ is said to be $C$-Salem.\footnote{This condition demands that the indicator function of the set have essentially optimal Fourier decay. The term ``Salem Set'' to refer to sets with essentially optimal Fourier decay was first used in the continuous setting, where Hausdorff dimension constrains Fourier decay; see \cite{Bluhm} and \cite{Mattila} for a detailed definition in that context. In \cite{IoRud}, Iosevich and Rudnev defined Salem sets in the discrete setting as those whose Fourier transforms satisfy the bound $\widehat{S_0}(\chi)\leq Cq^{-d}|S_0|^\frac{1}{2}$ for all nontrivial characters $\chi,$ which is the definition used above. This definition has become standard in the discrete setting (as in \cite{Chen}, \cite{Fraser}, \cite{IoMorgPak}, \cite{IoMPak}, and \cite{KohShen}); such uniform bounds are extremely useful for arguments in geometric combinatorics, as in  \cite{IoMorgPak}, \cite{IoRud}, and \cite{KohShen}, and somewhat subtly in our proof of theorem \ref{EdotE}. Despite the strength of the optimal Fourier decay condition, Salem sets are common: a random subset of a finite field obeys (up to a logarithmic factor) a Salem-type bound (\cite{Babai} theorem 5.14), and the graph of a generic polynomial function over a finite field of degree at least 2 is a true Salem set (\cite{Kings}, subsection 2.2, and especially theorem 2.7 and corollaries 2.8 and 2.9).} By analogy, we make the following definition, which we phrase over an arbitrary finite ring $R$ for convenience.
\begin{definition}\label{formalSalemDef}
    Let $R$ be a finite ring, and let $A$ be the operator such that
    $$Af(x) = \sum\limits_{y\in S(x)}f(y),$$
    where $S(x)$ is a family of $d_S$-dimensional subsets of $R^d$.\footnote{It is not critical to be precise about what notion of dimension we mean here, as shortly we will focus on a very specific family of operators; that said, for the purposes of this paper we may define dimension by requiring that $S(x)$ be defined by a system of polynomial equations in $x, y$ with coefficients in $\mathbb{Z}$, thus giving a scheme $X$ with a projection down to $\mathbb{A}^d(\mathbb{Z})$ by projection onto the variable $x$, and say that the dimension $d_S$ is the relative dimension of this morphism. Ultimately this notion of dimension is not defined with reference to the ambient ring $R$, so that it avoids any noncommutative ring theory even when $R$ is noncommutative.} We say that $A$ is $C$-Salem if
    $$||Af||_2\leq C|R|^{d_S/2}||f||_2,$$
for all mean-zero functions $f$.
\end{definition}

We should remark briefly on the sorts of operators we might expect to be Salem. We will specifically be interested in the setting where $S(x) = \{y: \Phi(x, y) = 0\}$ for some polynomial function $\Phi(x, y)$. In this setting, we should think of $S(x)$ as $d-1$ dimensional, so that the averaging operator $A$ is $C$-Salem if
\begin{equation}\label{SalemDef}
    ||Af||_2 \leq C|R|^\frac{d-1}{2}||f||_2
\end{equation}
for all mean-zero functions $f$ on $R^d$. For comparison, the following ``trivial'' bound holds for all functions:
\begin{proposition}
    Fix a polynomial function $\Phi(x, y)$. Assume that for all $x$, the set 
    $$S(x) = \{y:\Phi(x, y) = 0\}$$
    satisfies
    $$|S(x)| \leq c|R|^{d-1}$$
    and the set
    $$S'(x) = \{y:\Phi(y, x) = 0\}$$
    satisfies 
    $$|S'(x)| \leq c|R|^{d-1}.$$
    Then for any function $f: R^d\rightarrow \mathbb{C}$, we have that 
    $$||Af||_2 \leq c|R|^{d-1}||f||_2$$
\end{proposition}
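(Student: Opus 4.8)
The plan is to bound the operator norm of $A$ directly by a two-step Cauchy--Schwarz argument; this is really just Schur's test applied to the $0$--$1$ matrix $(A_{x,y})$ with $A_{x,y}=1$ precisely when $\Phi(x,y)=0$, whose row sums are the quantities $|S(x)|$ and whose column sums are the quantities $|S'(y)|$. First I would fix a function $f\colon R^d\to\mathbb{C}$ and, for each $x$, apply Cauchy--Schwarz to the defining sum, using the hypothesis on $|S(x)|$:
$$|Af(x)|^2 = \Big|\sum_{y\in S(x)} f(y)\Big|^2 \leq |S(x)|\sum_{y\in S(x)}|f(y)|^2 \leq c|R|^{d-1}\sum_{y:\,\Phi(x,y)=0}|f(y)|^2.$$

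Next I would sum this over $x\in R^d$ and interchange the order of summation. The key point is that for each fixed $y$ the resulting inner count $\#\{x:\Phi(x,y)=0\}$ is exactly $|S'(y)|$, so the second hypothesis applies and gives
$$\|Af\|_2^2 \leq c|R|^{d-1}\sum_{y}|f(y)|^2\,|S'(y)| \leq c^2|R|^{2(d-1)}\sum_{y}|f(y)|^2 = c^2|R|^{2(d-1)}\|f\|_2^2;$$
taking square roots yields the stated bound $\|Af\|_2\leq c|R|^{d-1}\|f\|_2$. (The same computation goes through verbatim whether $\|\cdot\|_2$ denotes the counting norm or a normalized $L^2$ norm, since the normalization cancels.)

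The argument is entirely routine and I do not anticipate any genuine obstacle. The only thing one must be careful about is recognizing that, after swapping the order of summation, the relevant fiber counts are those of $S'$ rather than of $S$ --- which is exactly why the proposition hypothesizes a bound on both $|S(x)|$ and $|S'(x)|$. In the symmetric situation where $\Phi(x,y)=\Phi(y,x)$ one has $S'=S$ and one of the two hypotheses becomes redundant.
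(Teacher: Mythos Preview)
Your proof is correct. It is the classical Schur test applied to the $0$--$1$ incidence matrix, carried out by a direct Cauchy--Schwarz and interchange of summation; every step is sound, and you correctly identify that the second hypothesis on $|S'(\cdot)|$ is precisely what controls the column sums after the swap.

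The paper takes a genuinely different route. Rather than working in $L^2$ from the start, it first bounds the $L^\infty\to L^\infty$ operator norms of both $A$ and the ``transpose'' operator $A'$ (trivially, by the row-sum bounds $|S(x)|,|S'(x)|\le c|R|^{d-1}$), multiplies to get $\|A'A\|_\infty\le c^2|R|^{2(d-1)}$, and then invokes spectral theory: since $A'$ is the adjoint of $A$, the spectral radius of the positive operator $A'A$ is bounded by any operator norm, so the top singular value of $A$---which equals $\|A\|_{L^2\to L^2}$---is at most $c|R|^{d-1}$. Your argument is more self-contained and avoids the detour through $L^\infty$ and singular values; the paper's argument, on the other hand, makes the underlying linear-algebraic structure (adjoint, $A^*A$, singular values) more visible and would transfer verbatim to any pair of consistent operator norms, not just those coming from row/column sums. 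Both are standard and equally short; they are really two faces of the same Schur-type bound.
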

\begin{proof}
Recall that $A$ is the operator

$$Af(x) = \sum\limits_{y\in S(x)}f(y).$$

Let $A'$ be the operator

$$A'f(x) = \sum\limits_{y\in S'(x)}f(y).$$

Let $||A||_\infty$ denote the operator norm of $A$ induced from the $L^\infty$ norm on the space of functions $R^d\rightarrow \mathbb{C}$, and let $||A'||_\infty$ be defined similarly. We claim $||A||_\infty \leq c|R|^{d-1}$ and $||A'||_\infty \leq c|R|^{d-1}$. Indeed, for each $x$, we have that 

\[\begin{split}
|Af(x)| &= \Abs{\sum\limits_{y\in S(x)}f(y)}\\
&\leq \text{sup}_y|f(y)|\#S(x)\\
&\leq c||f||_\infty|R|^{d-1};\end{split}\]

the bound for $||A'||_\infty$ may be proven similarly. It follows that

$$||A'A||_\infty\leq c^2|R|^{2d-2},$$

so that the largest eigenvalue of $A'A$ is at most $c^2|R|^{2d-2}$. But the matrix of $A'$ with respect to the natural coordinates is the (conjugate) transpose of that of $A$, so that the largest singular value of $A$ is at most $c|R|^{d-1}$; it follows that the $L^2$ norm of $A$ is also at most $c|R|^{d-1}$.
\end{proof}

In comparison, the $C$-Salem bound (\ref{SalemDef}) on the action of $A$ on mean-zero functions is a $\frac{d-1}{2}$ power savings; by the correspondence between Fourier decay and smoothness, this is analogous to the gain of $\frac{d-1}{2}$ derivatives in the \textit{Sobolev estimate}
$$||Tf||_{L_\frac{d-1}{2}^2(\mathbb{R}^d)}\leq C||f||_{L^2(\mathbb{R}^d)}$$
where $Tf(x) = \int_{\Gamma_x}f(y)d\sigma_x(y)$ averages $f$ over a continuous family of smooth manifolds of dimension $d-1$ in $\mathbb{R}^d$. In the continuous setting, the central result of \cite{PhongStein} guarantees the above Sobolev estimate will hold when $\Gamma_x = \{y:\Phi(x, y) = 0\}$ for a smooth function $\Phi$ satisfying the Phong-Stein rotational curvature condition, namely, that the Monge-Amp\`ere determinant
\[
\mathcal{M}(\Phi) =
 \begin{vmatrix}
0 & \frac{\partial\Phi}{\partial x_1} & \dots &  \frac{\partial\Phi}{\partial x_d}\\
 \frac{\partial\Phi}{\partial y_1} &  \frac{\partial^2\Phi}{\partial x_1\partial y_1} & \dots & \frac{\partial^2\Phi}{\partial x_d\partial y_1}\\
\vdots & \vdots & \ddots & \vdots\\
 \frac{\partial\Phi}{\partial y_d} & \frac{\partial^2\Phi}{\partial x_1\partial y_d} & \dots & \frac{\partial^2\Phi}{\partial x_d\partial y_d}
\end{vmatrix}
\]
restricted to the set where $\Phi(x, y) = 0$ does not vanish. In our setting, a more precise heuristic might be that when the operator $A$ sums the function $f$ over the set $S(x) = \{y:\Phi(x, y) = 0\}$ where $\Phi(x,y)$ is a polynomial function over $\Field{q}$ satisfying the rotational curvature condition of Phong and Stein, the corresponding averaging operator $A$ is likely to satisfy the $C$-Salem bound (\ref{SalemDef}).\footnote{For an instance of the results of Phong and Stein applied in geometric combinatorics in the continuous setting, see \cite{IoJoLaba}.}

In this paper, we will be specifically concerned with the operator $A_t$ defined by 
$$A_tf(x) = \Slim{y:y\cdot x = t}f(y),$$ where $t$ is a unit of $\Field{q}$ or a more general finite ring. This operator averages (really, sums) functions $f$ over hyperplanes which rotate as $x$ varies; it arises naturally in the study of the Erd\H{o}s-Falconer Distance Conjecture and the sum-product phenomenon in vector spaces over finite fields, see for instance \cite{HartIoKohRud} and the references contained therein. In the setting of finite fields, techniques from \cite{HartIoKohRud} establish that $A_t$ is $\sqrt{2}$-Salem; see our theorem \ref{finiteFieldsBound} and the discussion beforehand for details. This operator fits into the above paradigm with $\Phi(x, y) = y\cdot x - t$; we remark that the Monge-Amp\`ere determinant is 
\[
\begin{vmatrix}
0 & y_1 & y_2 & \cdots & y_d\\
x_1 & 1 & 0 & \cdots & 0\\
x_2 & 0 & 1 & \cdots & 0\\
\vdots & \vdots & \vdots & \ddots & \vdots\\
x_d & 0 & 0 & \cdots & 1
\end{vmatrix} = -x_1y_1 - x_2y_2 -\dots -x_dy_d;
\]
this equals $-t$ when restricted to the set where $x\cdot y = t$, and $t$ is a unit so $-t\neq 0$. 

In previous work (\cite{Kings}), the author showed that square-root cancellation for exponential sums over certain deterministic sets which is known to occur over finite fields does not occur over other finite rings, building on the foundational work of Iosevich, Murphy, and Pakianathan in \cite{IoMPak}.\footnote{These works build on the paradigm of studying arithmetic and geometric combinatorics over general rings, rather than just $\mathbb{Z}$, $\mathbb{R},$ and $\mathbb{F}_q$; for another example see Tao's work on the sum-product phenomenon in general rings \cite{Tao}.} It follows from these results that given a fixed constant $C$, the averaging operator $A$ corresponding to $S(x) = x - S_0$ where $S_0$ is a graph of a polynomial equation does not satisfy the $C$-Salem bound (\ref{SalemDef}) for sufficiently large finite rings which are not finite fields or matrix rings of small dimension\footnote{In the case where $S_0$ is the \textit{paraboloid} $x_d = x_1^2 + \dots + x_{d-1}^2 + c$, the operator $A$ is only $C$-Salem for sufficiently large finite rings which are specifically finite fields, i.e., matrix rings of small dimension do not enter the picture.}. In this paper, we establish a similar result for the operator $A_t$. In particular, we show the following:
\begin{theorem}
    Fix an integer $d\geq 2$. Suppose that $\{(R_i, t_i)\}_{i = 1}^\infty$ is a sequence of pairs of rings $R_i$ of size tending to infinity and units $t_i\in R_{i}^\times$, and that $A_{t_i}$ satisfies the $C$-Salem bound (\ref{SalemDef}) for some $C$ independent of $i$. Then the rings $R_i$ must eventually be fields.
\end{theorem}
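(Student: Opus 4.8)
The plan is to prove the contrapositive: if infinitely many of the $R_i$ fail to be fields, then the $C$-Salem bound must fail for those rings (for every fixed $C$, once $|R_i|$ is large enough). The key structural input is that a finite ring which is not a field has a nontrivial proper two-sided ideal, or more precisely contains a nonzero element $m$ that is annihilated by multiplication in a controlled way. Following the philosophy of \cite{IoMPak} and \cite{Kings}, I would first reduce to a clean model situation: since $R$ is finite, it decomposes (as an additive group, and compatibly with multiplication after passing to the structure theory) so that either $R$ has a large quotient ring that is not a field, or $R$ itself has a small characteristic obstruction; in either case one isolates a proper nonzero ideal $I\subseteq R$ with $|I|$ bounded below by a growing quantity (e.g.\ $|I|\geq |R|^{1/k}$ for the number of local factors, or one works with $R/\mathrm{rad}(R)$ versus a single local ring). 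The cleanest case to keep in mind is $R = \mathbb{Z}/p^2\mathbb{Z}$ with ideal $I = p\mathbb{Z}/p^2\mathbb{Z}$.

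The heart of the argument is to exhibit an explicit mean-zero test function $f$ on $R^d$ for which $\|A_t f\|_2 / \|f\|_2$ is genuinely larger than $|R|^{(d-1)/2}$. The natural choice, generalizing \cite{Kings}, is to take $f$ supported on a coset structure adapted to the ideal $I$: roughly, let $f = \chi\cdot \mathbf{1}_V$ where $V$ is a sub-$R$-module (or union of cosets of $I^d$) on which the bilinear form $x\cdot y$ degenerates, and $\chi$ is chosen to make $f$ mean-zero while keeping $|f|$ constant on its support. The point is that when $y$ ranges over the hyperplane $\{y : x\cdot y = t\}$, the constraint $x\cdot y = t$ only "sees" $y$ modulo the annihilator behaviour of $x$; if $x$ lies in a suitable small set, the hyperplane $\{y: x\cdot y = t\}$ either collapses or becomes empty in a way that concentrates the mass of $A_t f$. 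Concretely, for $x$ with all coordinates in $I$ (so $x\cdot y\in I$ for all $y$), the equation $x\cdot y = t$ has \emph{no} solutions since $t$ is a unit — but for $x$ just outside $I^d$ one gets hyperplanes that are unusually "thick" relative to the module $V$, and summing $f$ over them produces constructive interference of size $\gg |R|^{d-1}$ on a set of $x$'s of size $\gg$ a fixed power of $|R|$ bigger than one would allow. Carrying this out, one computes $\|A_t f\|_2^2$ as a sum over pairs $(y,y')$ in the support of $f$ of $\#\{x : x\cdot y = x\cdot(y') = t\}$, i.e.\ of the number of $x$ in an intersection of two affine hyperplanes, and the degeneracy of the form on $V$ forces this count to be too large.

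The main obstacle, and where the real work lies, is making the counting uniform across the whole sequence of rings without knowing their detailed structure: one must handle simultaneously the case of a ring with large nilpotent radical (where $I = \mathrm{rad}(R)$ does the job, as in the $\mathbb{Z}/p^2\mathbb{Z}$ model) and the case of a semisimple-but-not-simple ring $R \cong \prod R_j$ (where the "ideal" obstruction is a factor projection and one localizes the test function to one factor), and also rule out that matrix rings $M_n(\mathbb{F}_q)$ sneak through — unlike the situation for the paraboloid, here the non-commutative simple rings should \emph{not} be $C$-Salem, and verifying this likely requires a separate direct computation with a rank-one test function exploiting zero divisors in $M_n(\mathbb{F}_q)$. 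I would organize the proof by first proving the failure for $R$ a local ring that is not a field, then for $R$ a product of rings at least one of which is not a field via a projection/restriction argument, then for $R$ simple non-commutative, and finally assemble these via the Artin–Wedderburn / structure theorem for finite rings to cover every finite ring that is not a field, tracking the constant carefully so that a single fixed $C$ cannot survive as $|R_i|\to\infty$.
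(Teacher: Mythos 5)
Your high-level plan (argue the contrapositive, invoke the structure theory of finite rings, and build mean-zero test functions adapted to an ideal so that the hyperplane sums are constant along cosets) is indeed the architecture of the paper's argument, and your heuristic about fibers of $\{y:x\cdot y=t\}$ over the quotient is the right mechanism for the radical. But the proposal leaves unproven exactly the steps where the difficulty lies. The most serious gap is the simple noncommutative case: you explicitly defer $M_n(\mathbb{F}_q)$ to ``a separate direct computation with a rank-one test function,'' but no such computation is given, and this case cannot be reached by the ideal/coset mechanism you describe, because $M_n(\mathbb{F}_q)$ has no proper nonzero two-sided ideals (one-sided ideals do not make the form degenerate against a unit target $t$). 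The paper's Theorem \ref{matrixRings} handles it with a rather specific construction: a character built from the upper-left entries of the matrices, evaluated on a special set $S$ of tuples ($x_1$ invertible, the other $x_i$ adjusted so that $x_ix_1^{-1}$ has first column $e_1$), on which $A_t\chi$ attains modulus $|R|^{d-1}$; this yields the quantitative gain $\tfrac12 q^{(n^2-n)(d-1)/2}$, and nothing in your sketch produces an analogous bound. Note also that the solvability route via $E\cdot E\supseteq R^\times$ (Section \ref{Solvability}) provably cannot eliminate matrix rings of small dimension, so this case genuinely requires the direct estimate.

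The second gap is the assembly, which is not routine bookkeeping. To combine factors one needs the exact factorization $\|A_t'\chi\|_2=\prod_j\|A_{t_j}'\chi_j\|_2$ over product rings, plus a lower bound strictly greater than $1$ for the normalized trivial-character factor on every simple factor; the latter uses $|R^\times|\geq \tfrac14|R|$ and still fails for $\mathbb{F}_2$, which the paper must compute by hand (Lemma \ref{BooleanRings}). For the radical, pulling back a nontrivial character of $R/J$ gives a gain $|J|^{(d-1)/2}$ (Theorem \ref{JacobsonBound}), which only bounds $|J|$; to exclude a ring with a small nontrivial radical sitting over an arbitrarily large field quotient $\mathbb{F}_q$ one needs Nakayama's lemma ($J/J^2$ is a nonzero $\mathbb{F}_q$-vector space, so $q$ divides $|J|$, forcing $J=0$ once $q>(2C)^{2/(d-1)}$). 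Your proposal never addresses this last scenario, and the claim that the pair-count $\#\{x: x\cdot y=x\cdot y'=t\}$ is ``too large'' on your test function is asserted rather than computed. Until the matrix-ring estimate, the per-factor lower bounds, and the Nakayama step are actually carried out, the contrapositive does not close.
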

Equivalently (and as formulated in theorem \ref{GeneralCaseDots}), for each fixed $C>0$, there are only finitely many pairs $(R, t)$ with $R$ a ring and $t\in R^\times$ where $R$ is not a finite field and $A_t$ satisfies the $C$-Salem bound (\ref{SalemDef}). This provides an extension of the failure of square-root cancellation in the presence of zero-divisors beyond the context of convolution operators.
\subsection{\texorpdfstring{Dependence on $t$}{Dependence on t}}
\textit{A priori}, the properties of $A_t$ may depend on the choice of $t$. It turns out that for our purposes, the behavior is the same so long as $t$ is a unit. 
\begin{proposition}\label{allUnitsCreatedEqual}
    Let $R$ be a finite ring, and let $t$ and $t'$ be units. Then
    $$||A_t||_2 = ||A_{t'}||_2$$
\end{proposition}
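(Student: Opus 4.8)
The plan is to find a measure-preserving change of variables on $R^d$ that carries $A_t$ to $A_{t'}$, so that the two operators have literally the same operator norm (indeed they will turn out to be conjugate by a permutation matrix).

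The computation I would run first is the behaviour of the defining hyperplane under a dilation of the input variable. Fix a unit $s\in R^\times$ and for $x\in R^d$ write $xs=(x_1s,\dots,x_ds)$. By associativity and distributivity — and crucially without needing $R$ to be commutative — one has $y\cdot(xs)=\sum_i y_i(x_is)=\big(\sum_i y_ix_i\big)s=(y\cdot x)s$. Hence $y\cdot(xs)=t$ if and only if $y\cdot x=ts^{-1}$, and therefore
\[
A_tf(xs)=\sum_{y:\,y\cdot(xs)=t}f(y)=\sum_{y:\,y\cdot x=ts^{-1}}f(y)=A_{ts^{-1}}f(x).
\]

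Next, because $x\mapsto xs$ is a bijection of $R^d$, reindexing the sum over $R^d$ gives $||A_tf||_2=||A_{ts^{-1}}f||_2$ for every $f$; note that the function $f$ itself is untouched by this substitution, so in particular it is mean-zero on one side exactly when it is on the other. Taking the supremum over (mean-zero) $f$ of norm $1$ yields $||A_t||_2=||A_{ts^{-1}}||_2$ for every unit $s$. Finally, given units $t$ and $t'$, the element $s=t'^{-1}t$ is again a unit and satisfies $ts^{-1}=t'$, which gives the claim. I do not expect a genuine obstacle here: the entire content is the identity $y\cdot(xs)=(y\cdot x)s$ together with the observation that dilation by a unit permutes $R^d$; the only thing to watch is keeping the noncommutative products in the correct order (writing $ts^{-1}$ rather than $s^{-1}t$, and $s=t'^{-1}t$ rather than $tt'^{-1}$).
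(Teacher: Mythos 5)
Your argument is correct, and it takes a genuinely different route from the paper's, though both exploit the same basic fact that the dot product transforms nicely under dilation by a unit.

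The paper's proof dilates the \emph{input} variable $y$: it substitutes $y \mapsto tt'^{-1}y$ inside the inner sum, which transports $f$ to the dilated function $g(x) = f(tt'^{-1}x)$, and then observes $||A_t f||_2 = ||A_{t'} g||_2 \le ||A_{t'}||_2\,||g||_2 = ||A_{t'}||_2\,||f||_2$; the inequality $||A_t||_2 \le ||A_{t'}||_2$ is then upgraded to an equality by symmetry in $t$ and $t'$. You instead dilate the \emph{output} variable $x$, using $y\cdot(xs) = (y\cdot x)s$ to get the pointwise identity $A_t f(xs) = A_{ts^{-1}}f(x)$ with $f$ untouched, and then reindex the outer sum over $x$. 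This buys you something slightly stronger: the identity $||A_t f||_2 = ||A_{t'} f||_2$ for every single mean-zero $f$ (with the same $f$ on both sides), so the equality of operator norms drops out immediately without having to swap the roles of $t$ and $t'$. Both proofs are short and both require the same small care about the order of noncommutative products (you correctly track that $s = t'^{-1}t$ gives $ts^{-1} = t'$, where the paper's left-dilation uses $tt'^{-1}$).
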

\begin{proof}
    Let $f$ be a mean-zero function on $R^d$. Let $g$ be defined by $g(x) = f(tt'^{-1}x).$ As $x\mapsto tt'^{-1}x$ is bijective, we have $||g||_2 = ||f||_2$; we also have
    \[
    \begin{split}
        ||A_tf||_2 &= \left(\Slim{x}\Abs{\Slim{y:y\cdot x = t}f(y)}^2\right)^{1/2} \\
        &= \left(\Slim{x}\Abs{\Slim{y:y\cdot x = t'}f(tt'^{-1}y)}^2\right)^{1/2}\\
        &= \left(\Slim{x}\Abs{\Slim{y:y\cdot x = t'}g(y)}^2\right)^{1/2}\\
        &=\Norm{A_{t'}g}_2\\
        &\leq ||A_{t'}||_2||f||_2.
    \end{split}
    \]
    It follows that $||A_t||_2\leq ||A_{t'}||_2$; interchanging the roles of $t$ and $t'$ gives the result.
\end{proof}
\subsection{Solvability of Equations}\label{solvabilitySection}
The heart of the argument in \cite{Kings} is a statement about the solvability of the equation 
$$x_d-y_d = f(x_1-y_1, x_2-y_2,\dots, x_{d-1}-y_{d-1})$$
subject to a constraint of the form ${x}, {y}\in E$---if the underlying operator $A$ satisfies the $C$-Salem bound (\ref{SalemDef}), one can show that if $E$ contains at least $C|R|^\frac{d+1}{2}$ elements this constrained equation is guaranteed to have a solution. In this work, a similar role is played by a statement about the solvability of 
$$y\cdot x = y_1x_1 + y_2x_2 + \cdots + y_dx_d = t$$
subject to ${x},{y}\in E,$ or in other words, to a statement about how large $E$ must be to guarantee that $$E\cdot E\supseteq R^\times,$$
see proposition \ref{idealBound} for details. We outline these techniques in subsection \ref{Solvability}, as they are quite readily generalized. Unfortunately, arguments based on these sorts of results seem not to be able to rule out small matrix rings, as we briefly discuss in subsection \ref{Solvability}; our proof of theorem \ref{matrixRings} at the start of section \ref{Matrices} avoids this issue through a linear algebraic proof not sensitive to the dimension of the matrix ring. 
\subsection{A Graph-Theoretic Formulation}
In \cite{IoMPak}, Iosevich, Murphy, and Pakianathan defined a family of graphs $G_H(R)$ which they named ``Hyperbola Graphs,'' with vertex set $R^2$ and with two vertices $x$ and $y$ adjacent if their difference lies in the hyperbola
$$H(R) = \{(x_1, x_2)\in R^2|x_1x_2 = 1\}.$$
In our notation, the action of the adjacency matrix of $G_H(R)$ is the averaging operator defined by $S(x) = x-H$. Iosevich, Murphy, and Pakianathan's study of Kloosterman sums over finite rings allows them to produce lower bounds on the spectral gap of the hyperbola graphs, with important dependence on the structure of the underlying ring.

Over a \textit{commutative} ring $R$, our work corresponds to a study of the adjacency matrix of the ``dot-product graphs'' $G_{dot}(R, d, t)$ whose vertex set is $R^d$ and two vertices $x$ and $y$ are connected by an edge if $y\cdot x = t$. Over a \textit{noncommutative} ring, we may have $x\cdot y \neq y\cdot x$, so that this construction only gives a directed graph. Our results give partial information about the spectrum of the adjacency matrices of these (di)graphs, showing that this spectrum has a significant dependency on the structure of the underlying ring $R$.

In the special case where we work over a finite field $\mathbb{F}_q$, the dot product graphs consist of the disjoint union of a singleton (corresponding to the origin in $\mathbb{F}_q^d$) and a $q^{d-1}$-regular graph, as for any nonzero vector $x$ in $\mathbb{F}_q^d$ there are exactly $q^{d-1}$ vectors $y$ such that $y\cdot x = t$, by linear algebra. Because of this regularity, we can easily relate the spectrum of the adjacency matrix of this graph to the spectrum of its Laplacian; in particular, the Laplacian $L$ of this graph kills the subspace of functions constant on each of these two components of the graph; the rest of the spectrum of $L$ comes from the action of $L$ on the space of mean-zero functions $f$ such that $f(0) = 0$. In theorem \ref{finiteFieldsBound} below, we establish that over a finite field, 
$$||A_t||_2 \leq \sqrt{2}q^{(d-1)/2};$$
it immediately follows that for any eigenvalue $\lambda$ of $L$ other than the two known zero eigenvalues we have
$$\lambda \geq q^{d-1}-\sqrt{2}q^\frac{d-1}{2}.$$
In particular, if $d > 2$ or $q > 2$, the $q^{d-1}$-regular component of the dot product graph is connected.
\section{Notation and Preliminaries}
Throughout this paper, $R$ will be a finite ring (unital, with $1\neq 0$, not necessarily commutative).  The \textit{Fourier transform} on $R^d$ is defined as follows: given a function $f:R^d\rightarrow\mathbb{C}$, we define 

$$\widehat{f}(\chi) = \frac{1}{|R|^d}\Slim{x\in R^d}f(x)\overline{\chi(x)}$$
where $\chi$ is a homomorphism from the underlying additive group of $R^d$ to $\mathbb{C}$ (such a function is often called a character of $R^d$). With this definition, we have the Fourier inversion formula

$$f(x) = \Slim{\chi\in (R^d)^{\vee}}\widehat{f}(\chi)\chi(x)$$
where $(R^d)^\vee$ denotes the collection of characters of $R^d$, known as the \textit{dual} of $R^d$.

We will let $V$ denote the vector space of complex-valued functions on $R^d$. By Fourier inversion, $(R^d)^\vee$ forms a basis for $V$. Let $W\subset V$ denote the vector space of \textit{mean-zero functions} on $R^d$; that is, the space of all functions $f$ such that $$\sum_{x\in R^d}f(x) = 0.$$

Throughout this paper, it will be convenient to have separate notations for the operator $A_t$ as it acts on the space of mean-zero functions $W$ and as it acts on the space of all functions $V$. This will allow us to describe $C$-Salemness of $A_t$ (see definition \ref{formalSalemDef} and equation (\ref{SalemDef})) as a bound on the operator norm of the action on $W$, and to easily refer back to the global action when it becomes relevant in section \ref{ProductRings}. We fix our notations as follows:

Given $t\in R^\times,$ we denote by $A_t'$ the map from $V$ to itself given by:

$$(A_t'(f))(x) = \sum\limits_{y:y\cdot x = t}f(y)$$

and we let $A_t = A_t'|_{W}$ denote the operator $W\rightarrow V$ formed by restricting $A_t'$. NB: the image of $A_t$ is in general not contained in $W$. For instance, let $f$ be the function defined by:
$$f({x}) = \begin{cases}
    -1 & {x} = {0}\\
    \frac{1}{|R|^d-1} & {x} \neq {0}
\end{cases}$$
This function lies in $W$, yet $A_t f\not\in W$ as for each ${x} \in R^d$, $A_t(f)(x) \geq 0,$ as ${0}\cdot {x} \neq t$ no matter the value of ${x}$.

As in \cite{Kings} and \cite{IoMPak}, it will be convenient to define an appropriate Salem-number of the underlying ring $R$, which is the best constant $C$ for which $A_t$ satisfies the $C$-Salem bound (\ref{SalemDef}) over the ring $R$.
\begin{definition}
    Fix a ring $R$ and a unit $t\in R^\times$. The \textbf{$d$-dimensional $t$-incidence-Salem number} of $R$ is defined to be
    $$\frac{||A_t||_2}{|R|^\frac{d-1}{2}}$$
where the $L^2$ norm of $A_t$ is the operator norm of the action of $A_t$ on the space $W$ equipped with the $L^2$ norm.
\end{definition}
\begin{remark}
    By proposition \ref{allUnitsCreatedEqual}, this quantity is in fact independent of the choice of unit $t$. We nevertheless explicitly reference $t$ so that the underlying operator $A_t$ and equation
    $$y\cdot x = t$$
    are always clear from context.
\end{remark}
\subsection{\texorpdfstring{$A_t$ over Finite Fields}{A-t over Finite Fields}}
In section 3.2 of \cite{HartIoKohRud}, Hart, Iosevich, Koh, and Rudnev essentially computed $||A_tE||_2$ for $E$ the indicator function of a set $E$ in $\mathbb{F}_q^d;$ their argument generalizes to show that over a finite field $\mathbb{F}_q$ we have $||A_t||_2 \leq \sqrt{2}q^\frac{d-1}{2}$, so that the $d$-dimensional $t$-incidence-Salem number of an arbitrary finite field is at most $\sqrt{2}$. For convenience, we review the argument in our setting:
\begin{theorem}\label{finiteFieldsBound}
    Let $R = \mathbb{F}_q$, a finite field, $t\in\mathbb{F}_q^\times$, and $d\geq 2$. Then $$||A_t||_2\leq \sqrt{2}q^\frac{d-1}{2}.$$
\end{theorem}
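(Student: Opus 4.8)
The plan is to use the Fourier transform to reduce the operator norm bound to a pointwise estimate on the Fourier transform of the "weight" attached to each hyperplane. Fix a mean-zero $f$ on $\mathbb{F}_q^d$. Writing $A_t' f(x) = \sum_y \mathbf{1}[x\cdot y = t]\, f(y)$, the first step is to expand the indicator $\mathbf{1}[x\cdot y = t]$ using the additive character sum $\mathbf{1}[x\cdot y = t] = \frac{1}{q}\sum_{s\in\mathbb{F}_q}\psi(s(x\cdot y - t))$, where $\psi$ is a fixed nontrivial additive character of $\mathbb{F}_q$. This separates the $s=0$ term, which contributes $\frac{1}{q}\sum_y f(y) = 0$ since $f$ is mean-zero — this is the one place mean-zeroness is used — leaving $A_t f(x) = \frac{1}{q}\sum_{s\neq 0}\psi(-st)\sum_y \psi(s x\cdot y)f(y)$. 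The inner sum over $y$ is, up to normalization, $\widehat{f}$ evaluated at the character $y\mapsto \psi(sx\cdot y)$; concretely $\sum_y \psi(sx\cdot y)f(y) = q^d\,\widehat f(\chi_{sx})$ in the paper's normalization.

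The second step is to compute $\|A_t f\|_2^2 = \sum_x |A_t f(x)|^2$ by substituting this expression and expanding the square, getting a double sum over $s,s'\in\mathbb{F}_q^\times$. Opening everything up, one gets
\[
\|A_t f\|_2^2 = \frac{1}{q^2}\sum_{s,s'\neq 0}\psi(-(s-s')t)\sum_{y,y'} f(y)\overline{f(y')}\sum_x \psi(x\cdot(sy - s'y')).
\]
The inner sum over $x\in\mathbb{F}_q^d$ is $q^d$ if $sy = s'y'$ and $0$ otherwise. So the whole thing collapses to $\frac{q^d}{q^2}\sum_{s,s'\neq 0}\psi(-(s-s')t)\sum_{\substack{y,y'\\ sy=s'y'}} f(y)\overline{f(y')}$. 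Now split into the diagonal contribution where $y=y'$ and $s=s'$ (since $sy=sy'$ forces $y=y'$ for fixed $s\neq 0$, but across different $s,s'$ one can still have $sy=s'y'$ with $y\neq y'$), versus the off-diagonal.

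The third step is to estimate these two pieces. The "diagonal" piece $s = s'$ gives $\frac{q^d}{q^2}\sum_{s\neq 0}\sum_y |f(y)|^2 = \frac{q^d(q-1)}{q^2}\|f\|_2^2 \le q^{d-1}\|f\|_2^2$. For the off-diagonal piece $s\neq s'$: the condition $sy = s'y'$ with $y,y'$ ranging over $\mathbb{F}_q^d$ means $y' = (s'^{-1}s)y$, so $y$ is free and $y'$ is determined; summing $\sum_y f(y)\overline{f((s^{-1}s')^{-1}\cdot\text{correction})}$ — I should be careful, $y'=(s'^{-1}s)y$ — gives a sum $\sum_y f(y)\overline{f(\lambda y)}$ with $\lambda = s'^{-1}s \neq 0,1$, which by Cauchy–Schwarz is at most $\|f\|_2^2$. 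There are $(q-1)(q-2)$ such ordered pairs $(s,s')$, but the character factor $\psi(-(s-s')t)$ should be exploited: summing $\psi(-(s-s')t)$ over the relevant pairs, using that $t$ is a unit so $\psi(\cdot t)$ is still a nontrivial character, produces cancellation bringing the count down to $O(q)$ worth of terms after summing the geometric-type series, yielding a bound of the form $\le c\, q^{d-1}\|f\|_2^2$. Combining, $\|A_t f\|_2^2 \le 2 q^{d-1}\|f\|_2^2$, i.e. $\|A_t f\|_2 \le \sqrt 2\, q^{(d-1)/2}\|f\|_2$.

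The main obstacle I anticipate is bookkeeping the off-diagonal term carefully enough to land exactly the constant $\sqrt 2$ rather than something larger: one must genuinely use the cancellation in $\sum_{s\neq s'}\psi(-(s-s')t)$ (equivalently group the pairs by the value $u = s - s'\neq 0$ and observe $\sum_{u\neq 0}\psi(-ut)(\text{number of valid }s) $ telescopes), and one must handle the edge case $\lambda = 1$ (i.e. $s = s'$, already counted) versus $\lambda \neq 1$ cleanly, as well as confirm that the "$sy=s'y'$" counting over all of $\mathbb{F}_q^d$ (not just a sphere or a restricted set as in Hart–Iosevich–Koh–Rudnev) behaves as claimed. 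If the cancellation in the character sum is organized correctly, the off-diagonal contributes at most $q^{d-1}\|f\|_2^2$ and the diagonal at most $q^{d-1}\|f\|_2^2$, giving the factor $2$ under the square root.
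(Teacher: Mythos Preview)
Your overall plan is exactly the paper's: expand the indicator $\mathbf{1}[x\cdot y=t]$ via additive characters, kill the $s=0$ term using that $f$ is mean-zero, square, sum over $x$, use orthogonality to reduce to the constraint $sy=s'y'$, and then split into $s=s'$ and $s\neq s'$. Your diagonal estimate is correct and matches the paper verbatim.

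The gap is in the off-diagonal. You correctly observe that $sy=s'y'$ forces $y'=\lambda y$ with $\lambda=s'^{-1}s$, and that $\bigl|\sum_y f(y)\overline{f(\lambda y)}\bigr|\le\|f\|_2^2$ by Cauchy--Schwarz. But your proposed organization, ``group by $u=s-s'$ and sum $\sum_{u\neq 0}\psi(-ut)(\text{number of valid }s)$,'' does not work: for fixed $u$, the ratio $\lambda=s/(s-u)$ still varies with $s$, so the $y$-sum is different for each pair in the group and cannot be pulled outside the character sum. Grouping by the difference therefore produces no usable cancellation. The right move is to group by the \emph{ratio}: substitute $a=s/s'$, $b=s'$, so that $y'=ay$ depends only on $a$ and the character factor becomes $\chi\bigl(tb(1-a)\bigr)$. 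Now $\sum_{b\neq 0}\chi\bigl(tb(1-a)\bigr)=-1$ for every $a\neq 0,1$ (since $t(1-a)\neq 0$), and then the triangle inequality over $a$ together with Cauchy--Schwarz gives
\[
|II|\;\le\; q^{d-2}\sum_{a\neq 0,1}\Bigl|\sum_y f(y)\overline{f(ay)}\Bigr|\;\le\; q^{d-2}(q-2)\|f\|_2^2\;\le\; q^{d-1}\|f\|_2^2,
\]
which is exactly the bound you were aiming for and yields the constant $\sqrt{2}$.
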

\begin{proof}
    Let $\chi$ be a fixed nontrivial primitive additive character of $\mathbb{F}_q$, so that $\chi(sx)$ runs through all the characters of $\mathbb{F}_q$ as $s$ runs through the elements of $\mathbb{F}_q$.
    
    For a fixed $x$, we have that
    \[
    \begin{split}
        A_tf(x) &= \Slim{y\cdot x = t}f(y)\\
        &= \frac{1}{q}\Slim{y}\Slim{s}f(y)\chi(s(y\cdot x-t))\\
        &=\frac{1}{q}\Slim{y}f(y) + \frac{1}{q}\Slim{y}\Slim{s\neq 0}f(y)\chi(s(y\cdot x-t))\\
        &= \frac{1}{q}\Slim{y}\Slim{s\neq 0}f(y)\chi(s(y\cdot x-t))
    \end{split}
    \]
    where the last equality follows from the fact that $f$ is a mean-zero function. We therefore have
    \[
    \begin{split}
        ||A_tf||_2^2 &= \Slim{x}\left\lvert A_t f(x)\right\rvert^2\\
        &= \Slim{x}\frac{1}{q^2}\Slim{y, y'}\Slim{s, s'\neq 0}f(y)\overline{f(y')}\chi(s(y\cdot x-t))\overline{\chi(s'(y'\cdot x - t))}\\
        &= q^{-2}\Slim{y, y'}\Slim{s, s'\neq 0}f(y)\overline{f(y')}\Slim{x}\chi((sy-s'y')\cdot x + t(s'-s))\\
        &= q^{d-2}\Slim{\substack{sy = s'y'\\s, s' \neq 0}}\chi(t(s'-s))f(y)\overline{f(y')}\\
        &= I + II,
    \end{split}
    \]
    where the term $I$ corresponds to the terms where $s = s'$, while the term $II$ corresponds to the terms where $s\neq s'.$

    The first term is straightforward: as $s = s' \neq 0,$ $sy=s'y'$ implies that $y=y'$, so
    $$I = q^{d-2}\Slim{y, s\neq 0}|f(y)|^2 = (q-1)q^{d-2}||f||_2^2 \leq q^{d-1}||f||_2^2.$$

    For the second term, we make the substitution $a = s/s',$ $b = s'$ and write
    \[
    \begin{split}
        |II| &= q^{d-2}\left\lvert \Slim{y}\Slim{a\neq0, 1}f(y)\overline{f(ay)}\Slim{b\neq 0}\chi(tb(1-a))\right\rvert\\
        &= \left\lvert -q^{d-2}\Slim{a\neq0, 1}\Slim{y}f(y)\overline{f(ay)}\right\rvert\\
        &\leq q^{d-2}\Slim{a\neq0, 1}||f(y)||_2||f(ay)||_2\\
        &\leq q^{d-1}||f||_2^2,
    \end{split}
    \]
    using the Cauchy-Schwarz inequality to get from the second to the third line. It follows that $||A_tf||_2^2\leq 2q^{d-1}||f||_2^2,$ so that $||A_t||_2\leq \sqrt{2}q^\frac{d-1}{2}.$
\end{proof}
\subsection{The Structure of Finite Rings}
An essential ingredient in our theorem is the following structure theorem for finite rings (see proposition 2.5 of \cite{IoMPak}):
\begin{theorem}
    Every finite ring $R$ contains a two-sided ideal $J$, known as the \textit{Jacobson radical}, such that the quotient ring $R/J$ is isomorphic to a finite product of matrix rings over finite fields (possibly of dimension 1, i.e., finite fields).
\end{theorem}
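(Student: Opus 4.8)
The plan is to deduce this from the classical structure theory of Artinian rings together with Wedderburn's little theorem. First I would recall the definition of the Jacobson radical $J = J(R)$, which may be taken to be the intersection of all maximal left ideals of $R$; this is automatically a two-sided ideal, since it also coincides with $\{r \in R : 1 - arb \in R^\times \text{ for all } a, b \in R\}$, a description manifestly stable under multiplication on either side. The essential structural input is that a finite ring is Artinian (indeed, any descending chain of left ideals stabilizes for cardinality reasons), and for an Artinian ring the quotient $R/J$ is semisimple: $J$ is nilpotent, and $R/J$ has vanishing Jacobson radical, hence decomposes as a direct sum of simple left modules, which makes $R/J$ a semisimple ring.

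Next I would invoke the Wedderburn--Artin theorem, which asserts that any semisimple ring is isomorphic to a finite product $\prod_{i=1}^k M_{n_i}(D_i)$ of matrix rings over division rings $D_i$. Since $R$ is finite, so is $R/J$, and therefore each factor $M_{n_i}(D_i)$ is finite; in particular each division ring $D_i$ is finite. The last step is Wedderburn's little theorem: every finite division ring is commutative, hence a finite field. Combining these, $R/J \cong \prod_{i=1}^k M_{n_i}(\mathbb{F}_{q_i})$, where the convention $M_1(\mathbb{F}_{q_i}) = \mathbb{F}_{q_i}$ accounts for the ``possibly of dimension $1$'' clause in the statement.

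Since each ingredient is entirely standard, I do not anticipate a genuine obstacle; the only point that warrants a moment's care is confirming that finiteness really does supply the Artinian hypotheses needed to run the radical theory, but both chain conditions hold trivially, after which the cited theorems apply verbatim. Alternatively, and in the interest of brevity, one may simply quote the statement from proposition 2.5 of \cite{IoMPak} or from any standard reference on noncommutative algebra, which is the route I would take here.
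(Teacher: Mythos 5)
Your proposal is correct, and it matches the paper's treatment: the paper gives no proof of its own and simply cites proposition 2.5 of \cite{IoMPak}, exactly as you suggest doing at the end. Your sketch of the underlying derivation (finite $\Rightarrow$ Artinian, so $R/J(R)$ is semisimple; Wedderburn--Artin gives a product of matrix rings over division rings; Wedderburn's little theorem upgrades the finite division rings to finite fields) is the standard argument behind that citation and is sound.
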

We note that finite products of finite matrix rings are exactly the finite \textit{semisimple} rings (see section C-2.4 of \cite{Rotman} for a discussion of semisimple rings, and proposition 2.5 of \cite{IoMPak} for a proof of this fact). The structure of finite rings will guide the structure of our proof: in section \ref{Matrices} we will prove our theorem in the case where $R$ is a matrix ring; in section \ref{ProductRings} we will extend to finite products of matrix rings, and in section \ref{Rings} we will finally prove our theorem for all rings. 
\section{Matrix Rings}\label{Matrices}
As discussed in section \ref{solvabilitySection}, when $R$ has a sufficiently large dimension\footnote{In general, for $R = \text{Mat}_{n\times n}(\mathbb{F}_q)$, $n> 4$ is good enough. When $d \geq 4$, $n> 2$ is good enough.}, there is a conceptual proof that $A_t$ cannot satisfy the $C$-Salem bound (\ref{SalemDef}) using solvability of equations over subsets. We outline this argument in section \ref{Solvability} below. First, we give a computational argument which handles all cases.
\begin{theorem}\label{matrixRings}
    Let $R = \text{\emph{Mat}}_{n\times n}(\mathbb{F}_q)$, $t\in R^\times,$ and $d\geq 2$. There is a nontrivial character $\chi$ such that 
    $$\frac{||A_t\chi||_2}{|R|^{\frac{d-1}{2}}||\chi||_2} \geq \frac{1}{2}q^\frac{(n^2-n)(d-1)}{2}.$$
    In particular, for any fixed $C$, there are only finitely many non-field matrix rings whose $t'$-incidence-Salem number is bounded above by $C$ for some (equivalently, any) unit $t'\in R^\times$.

\end{theorem}
\begin{proof}
    Let $R$ be an $n\times n$ matrix ring over the field with $q$ elements, and fix $t\in R^\times.$ Our strategy is to manually construct a nontrivial character of $R^d$ which is constant on hyperplanes
    $$y\cdot x = t$$
    for a positive proportion of choices of $x$. Let $\chi_F$ be a nontrivial character of the underlying field. Let $\chi$ be the character of $R^d$ defined by 
    $$\chi(A_1, A_2, \dots, A_d) = \chi_F(a_1+a_2+\dots+a_d)$$
    where $a_i$ is the upper left entry of $A_i$. As $\chi_F$ is a nontrivial character, so is $\chi$, from which (by orthogonality with the trivial character) we have $\chi\in W$. We note that

$$||\chi||_2 = \sqrt{\sum\limits_{(A_1, A_2,\dots, A_d)\in R^d}|\chi(A_1,\dots, A_d)|^2} = \sqrt{\sum\limits_{{A}\in R^d}1} =q^{n^2d/2}.$$

To estimate $||A_t||_2$ we will estimate $(A_t\chi)(x)$ for $x$ in a special family of $d$-tuples of matrices. Let $S$ be the set of all ${x} = (x_1, x_2,\dots, x_d)\in R^d$ such that (1) $x_1$ is invertible, and (2) for each $2\leq i \leq d$, the rows of the matrix $x_i'$ formed by subtracting the first row of $x_1$ from the first row of $x_i$ are all contained in the span of the lower $n-1$ rows of $x_1$.\footnote{In the trivial case $n=1$, this corresponds to requiring $0\neq x_1=x_2=\dots = x_d$. This is wasteful but gives this theorem a uniform statement that interfaces simply with the rest of the paper.} Clearly,
$$|S| = |R^\times|q^{(n^2-n)(d-1)}$$
(pick a unit for $x_1$; the span of its last $n-1$ rows has size $q^{n-1}$ as all its rows are independent, and then we pick $n$ vectors from this span to construct the rows of $x_i'$ for each $2\leq i \leq d$). By lemma 2.6 in \cite{IoMPak}, $|R^\times| \geq \frac{1}{4}|R|$, so that 
$$|S|\geq \frac{1}{4}q^{(n^2-n)(d-1) + n^2}.$$ 

For each fixed ${x}\in S$ and $t\in R$, ${y}\cdot {x} = t$ has precisely $q^{n^2(d-1)}$ solutions, given by freely varying $y_2,\dots, y_d$. 
 Due to our construction of $\chi$ and $S$, $\chi(y)$ will be constant over this set of solutions. Indeed, for each $2\leq i \leq d$, $x_ix_1^{-1}$ has $(1, 0, \dots, 0)^T$ as its first column, as the first column of $x_1^{-1}$ is orthogonal to all of the last $(n-1)$ rows of $x_1$, and thus to any vector in their span, including every row of $x_i'.$ 
Thus, the first column of $y_ix_ix_1^{-1}$ is the same as the first column of $y_i$. We have that 
$$y_1x_1 = t-y_2x_2 - y_3x_3 - \dots - y_dx_d$$
so 
$$y_1 = tx_1^{-1}-y_2x_2x_1^{-1} - y_3x_3x_1^{-1} - \dots - y_dx_dx_1^{-1}$$
and 
$$y_1 +y_2x_2x_1^{-1} + y_3x_3x_1^{-1} + \dots + y_dx_dx_1^{-1} = tx_1^{-1}.$$
As the first column of $y_ix_ix_1^{-1}$ is the same as the first column of $y_i$, we have that 
$$(y_1+\dots+y_d)_{1,1} = (tx_1^{-1})_{1,1},$$
and thus
$$\chi({y}) = \chi_F[(tx_1^{-1})_{1,1}].$$
It follows that for $x\in S$, we have that $A_t\chi(x) = q^{n^2(d-1)}\chi_F((tx_1^{-1})_{1, 1})$, so
$$||A_t\chi||_2 \geq |S|^\frac{1}{2}q^{n^2(d-1)} \geq \frac{1}{2}q^{((n^2-n)(d-1) + n^2)/2}q^{n^2(d-1)}$$
Thus, we have:
$$||A_t||_2 \geq \frac{1}{2}q^{((n^2-n)(d-1) + n^2)/2}q^{n^2(d-1)}q^{-n^2d/2},$$
so that the $t$-incidence-Salem number of $R$ is at least
$$\frac{1}{2}q^{((n^2-n)(d-1) + n^2)/2}q^{n^2(d-1)}q^{-n^2d/2}q^{-n^2(d-1)/2} = \frac{1}{2}q^\frac{(n^2-n)(d-1)}{2}.$$
Now, assume that the $t$-incidence-Salem number of $R$ is at most $C$. After taking (base-$q$) logs we get:

$$(n^2 - n)(d-1)\leq 2\log_q(2C).$$

For the finiteness statement, we restrict to the case of non-field matrix rings, so $n\geq 2.$ Because we also have $d\geq 2$, the left hand side of the inequality is clearly always at least 1, so $q$ is bounded above. Also, the left hand side increases with $n$, so that $n$ is bounded above for each $q$, giving us the desired finiteness.

\end{proof}
\subsection{Solvability of Constrained Equations}\label{Solvability}
    As mentioned above in subsection \ref{solvabilitySection}, we can prove a slightly weaker version of theorem \ref{matrixRings} using the techniques of \cite{IoMPak} and \cite{Kings} together with the following proposition, inspired by \cite{HartIoKohRud}.
    \begin{theorem}\label{EdotE}
        Let $R$ be a finite ring, and let $t\in R^\times.$ Let $N(R) = \#\{(x, y)\in R^d\times R^d| x\cdot y = t\}$. Suppose that $A_t$ over $R^d$ satisfies the $C$-Salem bound (\ref{SalemDef}), and suppose that $R$ is isomorphic to its opposite ring $R^\text{op}$, say by the map $\varphi$.\footnote{The \textit{opposite ring} $R^\text{op}$ is the ring formed from $R$ by reversing the order of multiplication. Concretely, $\varphi$ is a bijection $R\rightarrow R$ such that $\varphi(x+y) = \varphi(x) + \varphi(y)$ and $\varphi(xy) = \varphi(y)\varphi(x).$} Then for any $E\subseteq R^d$ such that 
            $$|E| > \frac{2C|R|^\frac{d-1}{2}|R|^{2d}}{N(R)},$$
        $t\in E\cdot E.$
    \end{theorem}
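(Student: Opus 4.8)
The plan is to fix $E$, count the ordered pairs realizing $t$, and show this count is positive. Put $\nu(E) = \#\{(x,y)\in E\times E : y\cdot x = t\}$; since $E\cdot E = \{y\cdot x : x,y\in E\}$, the conclusion $t\in E\cdot E$ is precisely $\nu(E)>0$. Unwinding the definition of $A_t'$ gives $\nu(E) = \langle\mathbf{1}_E, A_t'\mathbf{1}_E\rangle$, where $\mathbf{1}_E$ is the indicator of $E$ and $\langle\cdot,\cdot\rangle$ is the standard inner product on $V$. I would then write $\mathbf{1}_E = c\mathbf{1} + g$, with $\mathbf{1}$ the constant function $1$, $c = |E|/|R|^d$, and $g = \mathbf{1}_E - c\mathbf{1} \in W$ mean-zero, and expand bilinearly:
$$\nu(E) = c^2\langle\mathbf{1}, A_t'\mathbf{1}\rangle + c\langle\mathbf{1}, A_t'g\rangle + c\langle g, A_t'\mathbf{1}\rangle + \langle g, A_t'g\rangle =: S_1 + S_2 + S_3 + S_4.$$
Since $\langle\mathbf{1}, A_t'\mathbf{1}\rangle = \sum_x \#\{y : y\cdot x = t\} = N(R)$ (the last equality because $\#\{(x,y): x\cdot y = t\} = \#\{(x,y): y\cdot x = t\}$), the main term is $S_1 = c^2 N(R) = |E|^2 N(R)/|R|^{2d}$, and multiplying the hypothesis on $|E|$ through by $|E|N(R)/|R|^{2d}$ makes it equivalent to the strict inequality $S_1 > 2C|R|^{(d-1)/2}|E|$. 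The whole proof then reduces to the estimate $|S_2| + |S_3| + |S_4| \le 2C|R|^{(d-1)/2}|E|$, which yields $\nu(E) \ge S_1 - 2C|R|^{(d-1)/2}|E| > 0$.

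The diagonal term and the first cross term are controlled directly by the Salem bound. Since $g$ is mean-zero, $\|A_t'g\|_2 = \|A_tg\|_2 \le C|R|^{(d-1)/2}\|g\|_2$, so Cauchy--Schwarz gives $|S_4| \le \|g\|_2\|A_t'g\|_2 \le C|R|^{(d-1)/2}\|g\|_2^2$ and $|S_2| \le c\|\mathbf{1}\|_2\|A_t'g\|_2 \le c|R|^{d/2}\, C|R|^{(d-1)/2}\|g\|_2$. A direct computation gives $\|g\|_2^2 = |E|\,|R^d\setminus E|/|R|^d$, which is at most $|E|$ (enough for $S_4$) and, by AM--GM applied to $|E|$ and $|R^d\setminus E|$, at most $|R|^d/4$ (which makes $|S_2| \le \tfrac12 C|R|^{(d-1)/2}|E|$).

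The remaining term $S_3 = c\langle g, A_t'\mathbf{1}\rangle$ is the crux, and the only place the hypothesis $R\cong R^{\mathrm{op}}$ is used. Bounding it by $c\|g\|_2\|A_t'\mathbf{1}\|_2$ would be hopeless, since $A_t'\mathbf{1}$ is nowhere near mean-zero; instead I would pass to the adjoint. With respect to $\langle\cdot,\cdot\rangle$ one computes $(A_t')^* = \tilde A_t'$, where $\tilde A_t'h(y) = \sum_{x : y\cdot x = t}h(x)$, so $|S_3| \le c\|\mathbf{1}\|_2\|\tilde A_t'g\|_2$, and it suffices to show $\|\tilde A_t'g\|_2 \le C|R|^{(d-1)/2}\|g\|_2$ for mean-zero $g$. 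Applying $\varphi$ coordinatewise to $R^d$, one checks $y\cdot x = t \iff \varphi(x)\cdot\varphi(y) = \varphi(t)$; substituting $z = \varphi(x)$ then rewrites $\tilde A_t'g$ as $\bigl(A_{\varphi(t)}'(g\circ\varphi^{-1})\bigr)\circ\varphi$. Because $\varphi$ permutes $R^d$, this is an $L^2$-isometric recoordinatization of $A_{\varphi(t)}'$ applied to the (still mean-zero) function $g\circ\varphi^{-1}$, and since $\varphi(t)$ is again a unit, Proposition \ref{allUnitsCreatedEqual} and the hypothesis give $\|A_{\varphi(t)}\|_2 = \|A_t\|_2 \le C|R|^{(d-1)/2}$. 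Hence $\|\tilde A_t'g\|_2 \le C|R|^{(d-1)/2}\|g\|_2$ and, exactly as for $S_2$, $|S_3| \le \tfrac12 C|R|^{(d-1)/2}|E|$. Summing the three bounds finishes the proof.

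I expect the main obstacle to be precisely the handling of $S_3$: recognizing that the naive Cauchy--Schwarz bound is too lossy, that one must move the operator onto the constant function via the adjoint, and that the adjoint is then governed by the Salem bound only after the anti-automorphism $\varphi$ converts it into $A_{\varphi(t)}'$ up to a unitary change of coordinates --- this is the structural reason $R\cong R^{\mathrm{op}}$ enters. (Consistently, $A_t'$ and $(A_t')^*$ have transposed matrices and hence equal operator norms on all of $V$, but a priori unequal norms on the subspace $W$, and $\varphi$ is exactly what closes that gap.) A minor secondary point is bookkeeping on $\|g\|_2$: the crude bound $\|g\|_2^2 \le |E|$ suffices for the diagonal term but not for the cross terms when $|E|$ is a constant fraction of $|R|^d$, where one needs instead $\|g\|_2^2 \le |R|^d/4$.
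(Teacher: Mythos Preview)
Your proof is correct and follows essentially the same approach as the paper: both arguments decompose the indicator $\mathbf{1}_E$ into its mean plus a mean-zero part, apply Cauchy--Schwarz together with the $C$-Salem bound, and use the anti-isomorphism $\varphi$ (plus Proposition~\ref{allUnitsCreatedEqual}) to convert the ``reversed'' sum into an instance of $A_{\varphi(t)}$ acting on a mean-zero function. The only difference is organizational---the paper splits sequentially (first in $y$, then, after applying $\varphi$, in $y'$), obtaining two error terms $I$ and $II$ each bounded by $C|R|^{(d-1)/2}|E|$, whereas you expand fully bilinearly into three error terms and use the AM--GM bound $\|g\|_2^2\le |R|^d/4$ on the cross terms; both routes land on the same total error $2C|R|^{(d-1)/2}|E|$.
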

    \begin{proof}
        Let $\nu(t) = |\{(x, y)\in E\times E| x\cdot y = t\}|.$ By abuse of notation, let $E$ denote the indicator function of $E$. Then we have that 
        \[\begin{split}
 	      \nu(t) &= \Slim{x}\sum_{y:y\cdot x = t}E(x)E(y)\\
               &= \Slim{x}E(x)\sum_{y:y\cdot x = t}\frac{|E|}{|R|^d} + \Slim{x}E(x)\sum_{y:y\cdot x = t}\left(E(y) - \frac{|E|}{|R|^d}\right)\\
               &= \Slim{x}E(x)\sum_{y:y\cdot x = t}\frac{|E|}{|R|^d} + I
        \end{split}\]
where $I = \Slim{x}E(x)\sum_{y:y\cdot x = t}\left(E(y) - \frac{|E|}{|R|^d}\right).$ We estimate $I$ using Cauchy-Schwarz, the bound 
$$\Norm{E-\frac{|E|}{|R|^d}}_2 \leq ||E||_2,$$
and the $C$-Salem bound (\ref{SalemDef}) on $A_t$:
\[
\begin{split}
    |I| &= \Abs{\Slim{x}E(x)\sum_{y:y\cdot x = t}\left(E(y) - \frac{|E|}{|R|^d}\right)} \\
    &\leq \Norm{E}_2\Norm{A_t\left(E-\frac{|E|}{|R|^d}\right)}_2\\
    &\leq C|R|^\frac{d-1}{2}||E||_2^2\\
    &= C|R|^\frac{d-1}{2}|E|
\end{split}
\]
As for the first term, we rearrange and apply the coordinate transform $t' = \varphi(t)$, $x' = \varphi(y)$, and $y' = \varphi(x)$, where we have applied $\varphi$ coordinatewise to all vectors:
\[
\begin{split}
    \Slim{x}E(x)\sum_{y:y\cdot x = t}\frac{|E|}{|R|^d}&= \frac{|E|}{|R|^d}\Slim{y}\Slim{x:y\cdot x = t}E(x) \\
    &=\frac{|E|}{|R|^d}\Slim{x'}\Slim{y':y'\cdot x' = t'}E(\varphi^{-1}(y'))\\
    &= \frac{|E|}{|R|^d}\Slim{x'}\Slim{y':y'\cdot x' = t'}\frac{|E|}{|R|^d} + \frac{|E|}{|R|^d}\Slim{x'}\Slim{y':y'\cdot x' = t'} \left(E(\varphi^{-1}(y'))-\frac{|E|}{|R|^d}\right)\\
    &= \frac{|E|^2}{|R|^{2d}}N(R) + II
\end{split}\]
where $II = \frac{|E|}{|R|^d}\Slim{x'}\Slim{y':y'\cdot x' = t'} \left(E(\varphi^{-1}(y'))-\frac{|E|}{|R|^d}\right).$ We estimate $II$ much like $I$, using the fact that $A_{t'}$ also satisfies the $C$-Salem bound (\ref{SalemDef}) by proposition \ref{allUnitsCreatedEqual}:
\[
\begin{split}
    |II| &= \Abs{\frac{|E|}{|R|^d}\Slim{x'}\Slim{y':y'\cdot x' = t'} \left(E(\varphi^{-1}(y'))-\frac{|E|}{|R|^d}\right)}\\
    &\leq \frac{|E|}{|R|^d}|R|^\frac{d}{2}||A_{t'}||_2|E|^\frac{1}{2}\\
    &\leq C|R|^\frac{d-1}{2}\frac{|E|^{3/2}}{|R|^{d/2}};
\end{split}
\]
applying the trivial bound $|E|\leq |R|^d$ we have
$$|II| \leq C|R|^\frac{d-1}{2}|E|.$$
It follows from the above that
$$\Abs{\nu(t) - \frac{|E|^2}{|R|^{2d}}N(R)} \leq 2C|R|^\frac{d-1}{2}|E|.$$
As long as 
$$\frac{|E|^2}{|R|^{2d}}N(R) > 2C|R|^\frac{d-1}{2}|E|$$
we have $\nu(t)>0$; this occurs so long as 
$$|E| > \frac{2C|R|^\frac{d-1}{2}|R|^{2d}}{N(R)}.$$
    \end{proof}
Typically, we expect $N(R)$ to grow like $c|R|^{2d-1}$, as the condition $x\cdot y = t$ should cut out a $1$-codimensional subset of $R^{2d}$. We remark on two special cases:
\begin{proposition}\label{N(R)}
Over finite fields, we have that $N(\mathbb{F}_q) = q^{2d-1} - q^{d-1}$. Over matrix rings, we have the uniform bound $N(\text{Mat}_{n\times n}(\mathbb{F}_q)) \geq \frac{1}{4}|R|^{2d-1}$.
\end{proposition}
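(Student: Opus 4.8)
The plan is to count the solutions $(x,y)$ to $x\cdot y = t$ by fixing $x$, counting the admissible $y$, and then summing over $x$; the two cases differ only in how I treat the "degenerate" $x$.

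For the finite field case $R=\mathbb{F}_q$, I would fix $x\in\mathbb{F}_q^d$. If $x=0$ then $x\cdot y = 0 \neq t$ since $t$ is a unit, so there are no solutions. If $x\neq 0$, the map $y\mapsto x\cdot y$ is a nonzero $\mathbb{F}_q$-linear functional on $\mathbb{F}_q^d$, hence surjective onto $\mathbb{F}_q$ with kernel of dimension $d-1$; the fiber over $t$ is a coset of this kernel and so has exactly $q^{d-1}$ elements. Summing over the $q^d-1$ nonzero choices of $x$ yields $N(\mathbb{F}_q) = (q^d-1)q^{d-1} = q^{2d-1}-q^{d-1}$, as claimed.

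For the matrix ring case $R = \text{Mat}_{n\times n}(\mathbb{F}_q)$, I would restrict attention to those $x=(x_1,\dots,x_d)$ with $x_1\in R^\times$; since we only want a lower bound on $N(R)$, discarding all other $x$ is harmless. For such an $x$, choosing $y_2,\dots,y_d\in R$ arbitrarily and setting $y_1 = x_1^{-1}(t - x_2y_2 - \cdots - x_dy_d)$ produces the unique solution of $x\cdot y = t$ extending $(y_2,\dots,y_d)$, so there are exactly $|R|^{d-1}$ solutions $y$ for each such $x$. The number of $x$ with $x_1\in R^\times$ is $|R^\times|\cdot|R|^{d-1}$, and Lemma 2.6 of \cite{IoMPak} gives $|R^\times|\geq \tfrac14|R|$, so
$$N(R) \geq |R^\times|\cdot|R|^{d-1}\cdot|R|^{d-1} \geq \tfrac14|R|\cdot|R|^{2d-2} = \tfrac14|R|^{2d-1}.$$

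There is no substantive obstacle here; the only points needing a little care are (i) using $t\neq 0$ to kill the $x=0$ contribution over a field, and (ii) in the matrix case, observing that invertibility of a single coordinate already fixes the fiber size exactly, so no inclusion–exclusion over "which coordinate is invertible" is required. I would also note in passing that the constant $\tfrac14$ here is precisely what is needed to feed Theorem \ref{EdotE} in order to recover the weak form of Theorem \ref{matrixRings} via the solvability statement $E\cdot E\supseteq R^\times$ discussed in subsection \ref{Solvability}.
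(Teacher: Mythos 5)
Your proof is correct and matches the paper's argument essentially line for line: split on whether $x$ is zero (or whether $x_1$ is a unit in the matrix case), observe the fiber over $t$ has exactly $q^{d-1}$ (resp.\ $|R|^{d-1}$) elements by solving for one coordinate, and invoke Lemma 2.6 of \cite{IoMPak} for the unit-count lower bound. No meaningful differences.
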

\begin{proof}
Over finite fields, we simply compute using linear algebra: for each ${x}\neq {0},$ there are precisely $q^{d-1}$ solutions to ${x}\cdot{y} = t$, whereas for  ${x} = {0}$, there are no solutions to ${x}\cdot{y} = t$ as $t \neq 0$. ${x}$ can take on  $q^d-1$ nonzero values, so that $N(\mathbb{F}_q) = q^{d-1}(q^d - 1) = q^{2d-1} - q^{d-1}.$

Over matrix rings, there are more non-units to keep track of, so that we will settle for just a lower bound. If $x_1\in R^\times$, then ${x}\cdot{y} = t$ has precisely $|R|^{d-1}$ solutions, given by freely varying $y_2, \dots, y_d$ and solving for $y_1$ using the fact that $x_1$ is a unit. By lemma 2.6 in \cite{IoMPak}, $|R^\times|\geq \frac{1}{4}|R|$, so that there are $\geq \frac{1}{4}|R|^d$ possible values for ${x}$, and thus $N(R)\geq \frac{1}{4}|R|^{2d-1}$.  
\end{proof}
The following corollary is immediate from theorem \ref{finiteFieldsBound}, theorem \ref{EdotE}, proposition \ref{N(R)}, and the fact that any matrix ring is isomorphic to its opposite by the map taking a matrix to its transpose:
\begin{corollary}
Given $E\subseteq \mathbb{F}_q^d$, if $|E|> \frac{2\sqrt{2}}{1-q^{-d}}q^\frac{d+1}{2}$, $E\cdot E\supseteq\mathbb{F}_q^\times$.

Suppose that $A_t$ satisfies the $C$-Salem bound (\ref{SalemDef}) over a matrix ring $R$. If $E\subseteq R^d$ satisfies $|E|> 8C|R|^\frac{d+1}{2}$, then $t\in E\cdot E$.
\end{corollary}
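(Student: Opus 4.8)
The plan is to combine the listed ingredients while keeping careful track of constants. For the finite-field statement I would first invoke Theorem \ref{finiteFieldsBound}, which gives $\|A_t\|_2 \le \sqrt{2}\,q^{(d-1)/2}$, i.e.\ $A_t$ satisfies the $C$-Salem bound (\ref{SalemDef}) with $C = \sqrt{2}$. Since $\mathbb{F}_q$ is commutative, it is (trivially) isomorphic to its opposite ring via the identity, so the hypotheses of Theorem \ref{EdotE} are met. Proposition \ref{N(R)} supplies $N(\mathbb{F}_q) = q^{2d-1} - q^{d-1} = q^{2d-1}(1 - q^{-d})$. Substituting $C = \sqrt{2}$ and this value of $N(R)$ into the threshold $\tfrac{2C|R|^{(d-1)/2}|R|^{2d}}{N(R)}$ of Theorem \ref{EdotE}, and using $q^{(d-1)/2}\cdot q^{2d}\cdot q^{-(2d-1)} = q^{(d+1)/2}$, the threshold simplifies to exactly $\tfrac{2\sqrt{2}}{1-q^{-d}}\,q^{(d+1)/2}$. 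Hence whenever $|E|$ exceeds this bound, $t \in E\cdot E$ for the chosen unit $t$.

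To upgrade this to $E\cdot E \supseteq \mathbb{F}_q^\times$, I would observe that the threshold just obtained is independent of $t$ (both $N(R)$ and, by Proposition \ref{allUnitsCreatedEqual}, the admissible constant $C = \sqrt 2$ are the same for every unit), so the same set $E$ works simultaneously for all $t \in \mathbb{F}_q^\times$. This is the only mildly conceptual point; everything else is substitution.

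For the matrix-ring statement I would run the identical argument with an inequality in place of the equality. Here $C$ is given by hypothesis, and a matrix ring is isomorphic to its opposite via transposition, so Theorem \ref{EdotE} applies. Proposition \ref{N(R)} gives the uniform bound $N(R) \ge \tfrac14 |R|^{2d-1}$, whence $1/N(R) \le 4|R|^{-(2d-1)}$ and the threshold of Theorem \ref{EdotE} is at most $8C\,|R|^{(d-1)/2}|R|^{2d}|R|^{-(2d-1)} = 8C\,|R|^{(d+1)/2}$. Therefore $|E| > 8C\,|R|^{(d+1)/2}$ forces $|E|$ above the threshold, so $t \in E\cdot E$.

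I do not expect a genuine obstacle: since the corollary reduces entirely to inserting the explicit values of $N(R)$ and the Salem constant into a previously established inequality, the only thing to be careful about is the bookkeeping of the exponents of $q$ (respectively $|R|$) and the factor $4$ coming from the lower bound $|R^\times| \ge \tfrac14|R|$ on the number of units used in Proposition \ref{N(R)}.
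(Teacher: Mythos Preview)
Your proposal is correct and matches the paper's approach exactly: the paper states the corollary is immediate from Theorem~\ref{finiteFieldsBound}, Theorem~\ref{EdotE}, Proposition~\ref{N(R)}, and the transpose isomorphism for matrix rings, and you have carried out precisely that substitution with the correct bookkeeping of constants. Your extra remark that the threshold is independent of $t$ (needed to pass from $t\in E\cdot E$ to $E\cdot E\supseteq\mathbb{F}_q^\times$) is a detail the paper leaves implicit but which you have handled appropriately.
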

In the finite field setting, our bounds are not as strong as they could be as a consequence of the level of generality at which we work; corollary 2.4 of \cite{HartIoKohRud} gives a version of the above result with the factor of $\frac{2\sqrt{2}}{1-q^{-d}}$ replaced by a factor of $1$. The key ingredient there not present here is the possibility of discarding some of the terms arising in the proof of our theorem \ref{finiteFieldsBound} by positivity, which is not possible when we work at the level of generality in which we state theorem \ref{finiteFieldsBound}.

The key input to the techniques of \cite{Kings} and \cite{IoMPak} is the following fact:
\begin{proposition}\label{idealBound}
Suppose $R$ is a ring and $t$ is a unit of $R$ such that there is a constant $C_1$ such that any $E\subseteq R^d$ satisfying 
$$|E|> C_1|R|^\frac{d+1}{2}$$
has the property that $t\in E\cdot E$. Then for any proper left (or right) ideal $I\subseteq R$ we have that
$$|I| \leq C_1^\frac{1}{d}|R|^{\frac{1}{2} + \frac{1}{2d}}.$$
In particular, if $R$ is isomorphic to its opposite ring, and there are constants $C, c$ such that $N(R)\geq c|R|^{2d-1}$ and $A_t$ satisfies the $C$-Salem bound (\ref{SalemDef}), then for any proper left (or right) ideal $I\subseteq R,$ we have that
$$|I| \leq (2C/c)^\frac{1}{d}|R|^{\frac{1}{2} + \frac{1}{2d}}.$$
\end{proposition}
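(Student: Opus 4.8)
The plan is to apply the solvability hypothesis to the set $E = I^d \subseteq R^d$ of all $d$-tuples whose coordinates lie in $I$, exploiting the fact that a proper one-sided ideal absorbs the products appearing in the dot product. First I would record that, since $t$ is a unit, a proper left (or right) ideal $I$ cannot contain $t$: if $t \in I$ then $1 = t^{-1}t \in RI \subseteq I$ (resp. $1 = tt^{-1} \in IR \subseteq I$), forcing $I = R$, a contradiction. Next, for $x,y \in E$ one has $x\cdot y = \sum_{i=1}^d x_i y_i$, and each term $x_i y_i$ lies in $I$: when $I$ is a left ideal we use $x_i \in R$ and $y_i \in I$ together with $RI\subseteq I$, and when $I$ is a right ideal we use $x_i \in I$ and $y_i \in R$ together with $IR \subseteq I$. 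Hence $E\cdot E \subseteq I$, so in particular $t \notin E\cdot E$. The \emph{contrapositive} of the hypothesis then forces $|E| \leq C_1 |R|^{\frac{d+1}{2}}$; since $|E| = |I|^d$, taking $d$-th roots gives $|I| \leq C_1^{1/d}|R|^{\frac{d+1}{2d}} = C_1^{1/d} q^{\frac{1}{2}+\frac{1}{2d}}$, where $q = |R|$.

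For the ``in particular'' clause I would simply substitute the quantitative output of Theorem \ref{EdotE} into the statement just established. Under the stated hypotheses — $R$ isomorphic to $R^{\mathrm{op}}$, the $C$-Salem bound (\ref{SalemDef}) for $A_t$, and $N(R) \geq c|R|^{2d-1}$ — Theorem \ref{EdotE} yields $t \in E\cdot E$ whenever $|E| > 2C|R|^{\frac{d-1}{2}}|R|^{2d}/N(R)$, and the lower bound on $N(R)$ shows this threshold is at most $(2C/c)|R|^{\frac{d+1}{2}}$. So the hypothesis of the first part holds with $C_1 = 2C/c$, and the bound $|I| \leq (2C/c)^{1/d} q^{\frac{1}{2}+\frac{1}{2d}}$ follows at once.

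The argument is short, and I do not expect a genuine obstacle: the solvability statement does essentially all the work. The only points needing care are bookkeeping ones — matching the side of the ideal to the order of multiplication in $x\cdot y = \sum_i x_i y_i$ (a left ideal absorbs the left factor in each term $x_i y_i$, a right ideal the right factor), and noting that the parameter written $q$ in the statement must be $|R|$, consistent with the matrix-ring notation used earlier. One should also check the degenerate case $I = 0$, where $E = \{0\}$ and the inequality $1 \le C_1^{1/d} q^{\frac12+\frac{1}{2d}}$ is immediate.
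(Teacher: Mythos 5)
Your proof is correct and follows essentially the same route as the paper: take $E=I^d$, observe that a proper one-sided ideal absorbs every term $x_iy_i$ of the dot product so that $E\cdot E\subseteq I$ cannot contain the unit $t$, and then read the bound off the contrapositive of the solvability hypothesis (resp.\ off Theorem~\ref{EdotE} for the ``in particular'' clause). You are if anything a bit more careful than the paper about tracking which side of the ideal absorbs which factor and about the interpretation $q=|R|$, but the argument is the same.
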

\begin{proof}
Let $I$ be a proper left (or right) ideal, and let $E = I^d$. We claim $t\notin E\cdot E$. If not, then we may write
$$t = x_1y_1 + \dots + x_dy_d$$
with $x_1, x_2, \dots, x_d, y_1, y_2, \dots, y_d\in I$. As $I$ is an ideal (left or right) it follows that $t\in I$. But as $t$ is a unit, it follows that $I = R,$ contradicting properness of $I$. It follows that 
$$|E|\leq C_1|R|^\frac{d+1}{2}$$
and thus that 
$$|I| \leq C_1^\frac{1}{d}|R|^{\frac{1}{2} + \frac{1}{2d}}.$$
\end{proof}
To establish our result for matrix rings of large enough dimension, one can play this bound on the size of ideals off of the known size of the (left) ideal consisting of all matrices with final column ${0}$. The exact cutoff for the dimensions of the matrix rings handled depends on $d$: if $d\geq 4,$ we can rule out matrix rings of dimension $n > 2$; for $d = 3$ we can only rule out matrix rings of dimension $n > 3$; for $d = 2$ we can only rule out matrix rings of dimension $n > 4$. See the proof of theorem 2.15 in \cite{Kings} for the details (in \cite{IoMPak}, the special form of the relevant equation allows the authors to squeeze more out of these techniques, and the numerology is slightly different). 
\section{Semisimple Rings}\label{ProductRings}
As in \cite{IoMPak} and \cite{Kings}, in order to extend our results to semisimple rings (that is, products of matrix rings), we will need three ingredients: a lower bound on $||A_t'||_2$ provided by computing $||A_t'\chi_0||_2$; a way to factor $||A_t||_2$ over a product ring; and a uniform lower bound on the $t$-incidence-Salem number of any matrix ring $R$, including fields. We start with the last ingredient, as we've already done most of the work in theorem \ref{matrixRings}.
\begin{lemma}\label{generalLower}
    Let $R = \text{\emph{Mat}}_{n\times n}(\mathbb{F}_q)$, $n\geq 1$, and $t\in R^\times.$ Then there is a nontrivial character $\chi$ of $R^d$ such that 
    $$\frac{||A_t\chi||_2}{|R|^{\frac{d-1}{2}}||\chi||_2}\geq 1/2.$$ 
    In particular, the $t$-incidence-Salem number of $R$ is at least $\frac{1}{2}.$
\end{lemma}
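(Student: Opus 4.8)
The plan is to reduce Lemma \ref{generalLower} directly to the computation already carried out in the proof of Theorem \ref{matrixRings}, treating fields as the degenerate case $n=1$. First I would observe that the character $\chi$ and the set $S$ defined in the proof of Theorem \ref{matrixRings} make sense for every $n\geq 1$: when $n=1$ the conditions defining $S$ become vacuous (condition (1) just says $x_1$ is a unit, and condition (2) is empty since there are no "lower $n-1$ rows"), so $S = R^\times \times R^{d-1}$ and the construction of $\chi$ is the usual nontrivial additive character. The key identity established there, namely that for $x\in S$ one has $A_t\chi(x) = q^{n^2(d-1)}\chi_F((tx_1^{-1})_{1,1})$, holds verbatim, and its proof goes through unchanged for $n=1$ (indeed it becomes trivial).

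Next I would simply carry through the same string of inequalities. From $|S|\geq \tfrac14 q^{(n^2-n)(d-1)+n^2}$ and $|A_t\chi(x)| = q^{n^2(d-1)}$ for $x\in S$, we get
$$\|A_t\chi\|_2 \geq |S|^{1/2}q^{n^2(d-1)} \geq \tfrac12 q^{((n^2-n)(d-1)+n^2)/2}q^{n^2(d-1)},$$
and dividing by $|R|^{(d-1)/2}\|\chi\|_2 = q^{n^2(d-1)/2}q^{n^2 d/2}$ yields
$$\frac{\|A_t\chi\|_2}{|R|^{(d-1)/2}\|\chi\|_2} \geq \tfrac12 q^{(n^2-n)(d-1)/2}.$$
Since $n\geq 1$ gives $n^2-n\geq 0$, the exponent is nonnegative and $q^{(n^2-n)(d-1)/2}\geq 1$, so the ratio is at least $\tfrac12$, which is exactly the claim. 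This also explains why the bound degrades to exactly $\tfrac12$ precisely in the field case $n=1$: there the power savings exponent vanishes.

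There is essentially no obstacle here; the only thing to be careful about is bookkeeping in the $n=1$ edge case — making sure the definition of $\chi$ (upper-left entry, which for $1\times 1$ matrices is the entry itself) and the counting $|S| = |R^\times|q^{(n^2-n)(d-1)} = |R^\times|$ are interpreted correctly, and that the lemma only needs $|R^\times|\geq \tfrac14|R|$ (Lemma 2.6 of \cite{IoMPak}), which holds for all matrix rings over finite fields including fields themselves. So the write-up would just be a short remark that the computation in Theorem \ref{matrixRings} applies with $n\geq 1$ and yields the stated inequality, with the factor $q^{(n^2-n)(d-1)/2}\geq 1$ being all that is needed.
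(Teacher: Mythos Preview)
Your approach is exactly the paper's: invoke the displayed inequality from Theorem \ref{matrixRings} and observe that $(n^2-n)(d-1)\geq 0$ for $n\geq 1$, so $\tfrac12 q^{(n^2-n)(d-1)/2}\geq \tfrac12$.

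One correction to your $n=1$ bookkeeping: condition (2) is not vacuous there. The span of the (empty set of) lower $n-1$ rows is $\{0\}$, so condition (2) forces every row of $x_i'$ to be $0$, i.e.\ $x_i = x_1$ for all $i\geq 2$; hence $S = \{(x_1,\dots,x_1): x_1\in R^\times\}$, not $R^\times\times R^{d-1}$. This is consistent with the formula $|S| = |R^\times|q^{(n^2-n)(d-1)} = |R^\times|$ that you correctly use in the inequalities, and the identity $A_t\chi(x) = q^{n^2(d-1)}\chi_F((tx_1^{-1})_{1,1})$ does hold on this correct $S$ (indeed for $n=1$ and $x_1$ a unit with some $x_i\neq x_1$ one computes $A_t\chi(x)=0$). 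So your chain of inequalities is unaffected once $S$ is read correctly.
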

\begin{proof}
Let $\chi$ be the nontrivial character produced by theorem \ref{matrixRings}. Then  
$$\frac{||A_t\chi||_2}{|R|^{\frac{d-1}{2}}||\chi||_2}\geq \frac{1}{2}q^\frac{(n^2-n)(d-1)}{2}.$$
As $d\geq 2$ and $n\geq 1,$ $(n^2 - n)(d-1)\geq 0,$ so $\frac{1}{2}q^\frac{(n^2-n)(d-1)}{2}\geq \frac{1}{2}.$

\end{proof}

Now, suppose the ring $R$ factors as $R\cong R_1\times R_2$ (by abuse of notation, we'll identify the two) with $t = (t_1, t_2)$, $t_j\in R_j^\times$, $j = 1, 2$. By $V_j$ we mean the vector space of complex-valued functions on $R_j^d,$ and by $W_j$ we mean the subspace of \textit{mean-zero} complex-valued functions on $R_j^d,$ $j = 1,2$. $A_{t_1}'$ and $A_{t_2}'$ will be the relevant summation operators on $V_1$ and $V_2$, respectively. Any character $\chi$ of $R^d$ will factor as $\chi_1\chi_2$, where $\chi_j$ is a character of $R_j^d$. For any function $f\in V$ which factors as $f_1f_2$, $f_j\in V_j$, we have that $f\in W$ if at least one $f_j\in W_j$; in particular, $\chi$ is a nontrivial character so long as at least one of the characters $\chi_j$ is nontrivial.

\begin{lemma}\label{Products}
    In the above scenario, we have $||A_t'\chi||_2 = ||A_{t_1}'\chi_1||_2||A_{t_2}'\chi_2||_2$.
\end{lemma}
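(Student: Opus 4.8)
The plan is to compute $(A_t'\chi)(x)$ directly in coordinates adapted to the product decomposition $R^d = R_1^d \times R_2^d$, and observe that the sum over the hyperplane $\{y : y\cdot x = t\}$ factors as a product of two independent sums over hyperplanes in $R_1^d$ and $R_2^d$ respectively. Writing $x = (x^{(1)}, x^{(2)})$ and $y = (y^{(1)}, y^{(2)})$ with $x^{(j)}, y^{(j)} \in R_j^d$, the dot product decomposes coordinatewise: $y\cdot x = t$ is equivalent to the pair of conditions $y^{(1)}\cdot x^{(1)} = t_1$ and $y^{(2)}\cdot x^{(2)} = t_2$, since $R_1$ and $R_2$ multiply componentwise and the ideals $R_1\times\{0\}$ and $\{0\}\times R_2$ are orthogonal. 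Likewise $\chi(y) = \chi_1(y^{(1)})\chi_2(y^{(2)})$ because $\chi$ factors as $\chi_1\chi_2$.

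Putting these together, for each fixed $x = (x^{(1)}, x^{(2)})$,
\[
(A_t'\chi)(x) = \sum_{\substack{y^{(1)}\cdot x^{(1)} = t_1 \\ y^{(2)}\cdot x^{(2)} = t_2}} \chi_1(y^{(1)})\chi_2(y^{(2)}) = \Bigl(\sum_{y^{(1)}\cdot x^{(1)} = t_1}\chi_1(y^{(1)})\Bigr)\Bigl(\sum_{y^{(2)}\cdot x^{(2)} = t_2}\chi_2(y^{(2)})\Bigr) = (A_{t_1}'\chi_1)(x^{(1)})\cdot(A_{t_2}'\chi_2)(x^{(2)}).
\]
Then I would take squared $L^2$ norms over $R^d$ and use the fact that the sum over $x\in R^d$ factors as a double sum over $x^{(1)}\in R_1^d$ and $x^{(2)}\in R_2^d$:
\[
\|A_t'\chi\|_2^2 = \sum_{x^{(1)}}\sum_{x^{(2)}} \bigl|(A_{t_1}'\chi_1)(x^{(1)})\bigr|^2\bigl|(A_{t_2}'\chi_2)(x^{(2)})\bigr|^2 = \Bigl(\sum_{x^{(1)}}\bigl|(A_{t_1}'\chi_1)(x^{(1)})\bigr|^2\Bigr)\Bigl(\sum_{x^{(2)}}\bigl|(A_{t_2}'\chi_2)(x^{(2)})\bigr|^2\Bigr) = \|A_{t_1}'\chi_1\|_2^2\,\|A_{t_2}'\chi_2\|_2^2.
\]
Taking square roots gives the claimed identity.

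I do not expect a serious obstacle here; the only point requiring care is the bookkeeping of the identification $R^d \cong R_1^d \times R_2^d$ and the verification that the hyperplane condition $y\cdot x = t$ really does split as the conjunction of the two component conditions — this uses that in the product ring multiplication and addition are componentwise, so the dot product in $R^d$ is the "sum" in $R_1\times R_2$ of the two component dot products, and an element of $R_1\times R_2$ vanishes iff both components do. Once that is in place, everything is a routine rearrangement of finite sums with no convergence or positivity issues. It is worth noting that the lemma is stated for the global operators $A_{t_j}'$ (acting on all of $V_j$), not their restrictions to mean-zero functions, which is exactly what makes the factorization clean: we never need $\chi_1$ or $\chi_2$ individually to be mean-zero, only that their product is a nontrivial (hence mean-zero) character of $R^d$, which holds as soon as one factor is nontrivial.
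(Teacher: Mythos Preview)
Your proof is correct and follows essentially the same approach as the paper: both factor the hyperplane sum via the product decomposition to obtain $(A_t'\chi)(x) = (A_{t_1}'\chi_1)(x^{(1)})(A_{t_2}'\chi_2)(x^{(2)})$, then take $L^2$ norms and use that the sum over $x$ splits as a double sum. Your additional remarks about why the hyperplane condition splits componentwise and why the lemma is stated for the unrestricted operators $A_{t_j}'$ are accurate and helpful, but the underlying argument is the same as the paper's.
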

\begin{proof}
    We have that

$$\sum\limits_{y\cdot x = t} \chi(y) = \sum\limits_{y_1\cdot x_1 = t_1;\, y_2\cdot x_2 = t_2} \chi_1(y_1)\chi_2(y_2) = \left(\sum\limits_{y_1\cdot x_1 = t_1}\chi_1(y_1)\right)\left(\sum\limits_{y_2\cdot x_2 = t_2} \chi_2(y_2)\right),$$

where $x = (x_1, x_2),$ and $ y = (y_1, y_2)$ are decompositions of $x$ and $y$ into elements of $R_1^d$ and $R_2^d$. As a result, for $x = (x_1, x_2)$ and $t =(t_1, t_2)$, we may write

$$A_{t}'\chi(x) = (A_{t_1}'\chi_1(x_1))(A_{t_2}'\chi_2(x_2))$$

Taking norms, squaring, summing over $x$, and taking a square root, we have that

$$||A_t'\chi||_2 = ||A_{t_1}'\chi_1||_2||A_{t_2}'\chi_2||_2$$

\end{proof}
In particular, we can choose either of $\chi_1$ or $\chi_2$ to be the trivial character and still have $\chi\in W$. As there will be no cancellation in the sum defining $A_{t_j}'(\chi_j)$ if $\chi_j$ is trivial, we expect this to make $||A_t'\chi||_2$ large, so that $R$ cannot have square-root cancellation. To make this precise, we derive a lower bound on $A_t'\chi$ for $\chi = \chi_0,$ the trivial character:
\begin{lemma}\label{trivialChar}
    Let $R$ be a ring, and $t\in R^\times.$ Then 
    $$\frac{||A_t'\chi_0||_2}{||\chi_0||_2} \geq |R^\times||R|^{d - 2}.$$
    In particular, if $R$ is a matrix ring, 
    $$\frac{||A_t'\chi_0||_2}{||\chi_0||_2} \geq \frac{1}{4}|R|^{d-1}.$$
\end{lemma}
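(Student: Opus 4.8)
The plan is to evaluate $||A_t\chi_0||_2$ essentially by hand, using the fact that since $\chi_0$ is the trivial character, $A_t\chi_0(x)$ is nothing but a solution count. First I would record the two elementary facts: $\chi_0\equiv 1$, so $||\chi_0||_2=|R|^{d/2}$, and
$$A_t\chi_0(x)=\#\{y\in R^d:\ y\cdot x=t\}.$$
The problem is then reduced to bounding $\sum_{x\in R^d}\big(\#\{y:\ y\cdot x=t\}\big)^2$ from below.

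The key observation is the same one used in the proof of Proposition \ref{N(R)}: whenever the first coordinate $x_1$ of $x$ is a unit, the equation $y_1x_1+\cdots+y_dx_d=t$ has exactly $|R|^{d-1}$ solutions, since one chooses $y_2,\dots,y_d$ freely and then $y_1=(t-y_2x_2-\cdots-y_dx_d)x_1^{-1}$ is forced. There are exactly $|R^\times||R|^{d-1}$ such $x$, so discarding all terms coming from $x$ with $x_1\notin R^\times$ gives
$$||A_t\chi_0||_2^2=\sum_{x}\left(\#\{y:\ y\cdot x=t\}\right)^2\ \geq\ |R^\times||R|^{d-1}\cdot\left(|R|^{d-1}\right)^2\ =\ |R^\times||R|^{3d-3}.$$
Dividing by $||\chi_0||_2^2=|R|^d$ and taking a square root yields $||A_t\chi_0||_2/||\chi_0||_2\geq |R^\times|^{1/2}|R|^{(2d-3)/2}$.

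It remains to massage this into the stated form. Using the trivial inequality $|R^\times|\leq|R|$ in the guise $|R^\times|^{1/2}\geq|R^\times|\,|R|^{-1/2}$, I obtain $|R^\times|^{1/2}|R|^{(2d-3)/2}\geq|R^\times||R|^{d-2}$, which is the first claimed bound. For the matrix-ring statement I then invoke Lemma 2.6 of \cite{IoMPak}, which gives $|R^\times|\geq\frac14|R|$, so that $|R^\times||R|^{d-2}\geq\frac14|R|^{d-1}$.

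There is no genuine obstacle here; the computation is routine. The only two points needing a moment's care are: (i) the final bookkeeping, where the natural estimate produces the slightly stronger quantity $|R^\times|^{1/2}|R|^{d-3/2}$ and one must notice that the weaker form $|R^\times||R|^{d-2}$ written in the lemma follows from it via $|R^\times|\leq|R|$; and (ii) keeping the left/right-multiplication conventions straight when solving $y_1x_1=t-y_2x_2-\cdots-y_dx_d$ for $y_1$, since $R$ may be noncommutative — but this is exactly the solve already carried out in Proposition \ref{N(R)}.
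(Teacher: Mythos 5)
Your proof is correct, and it takes a slightly different (and in fact more efficient) route than the paper. The paper expands $\Norm{A_t\chi_0}_2^2$ as a triple sum over $(y,x,y')$ with $y\cdot x=t$ and $(y-y')\cdot x=0$, then restricts to $y_1\in R^\times$ and $x_2\in R^\times$ and counts each layer, arriving at $\Norm{A_t\chi_0}_2^2\geq |R^\times|^2|R|^{3d-4}$, which gives the stated bound $|R^\times||R|^{d-2}$ on the nose. You instead evaluate $A_t\chi_0(x)=\#\{y: y\cdot x=t\}$ pointwise: it equals exactly $|R|^{d-1}$ whenever $x_1\in R^\times$ (the same unit-coordinate solve as in Proposition \ref{N(R)}, and your right-multiplication by $x_1^{-1}$ is the correct one for the convention $y\cdot x=y_1x_1+\dots+y_dx_d$), and summing over the $|R^\times||R|^{d-1}$ such $x$ gives $\Norm{A_t\chi_0}_2^2\geq |R^\times||R|^{3d-3}$. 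This is stronger than the paper's intermediate bound by a factor $|R|/|R^\times|$, i.e.\ you get $\frac{\Norm{A_t\chi_0}_2}{\Norm{\chi_0}_2}\geq |R^\times|^{1/2}|R|^{d-3/2}$, and your relaxation via $|R^\times|\leq|R|$ to the stated form $|R^\times||R|^{d-2}$ is valid; the matrix-ring consequence via $|R^\times|\geq\frac14|R|$ is identical in both arguments. The only cosmetic remark is that $\chi_0$ is not mean-zero, so strictly speaking the operator acting on it is $A_t'$ rather than its restriction $A_t$; the paper's statement has the same slip, and both proofs interpret it the same way, so nothing is affected.
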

\begin{proof}
    We compute:

$$||A_t'\chi_0||_2^2 = \sum\limits_{{x}}\left(\sum\limits_{{y}: {y}\cdot {x} = t} 1\right)^2 = \sum\limits_{{x},\, {y}:\, {y}\cdot {x} = t}\;\sum\limits_{{y'}:\, {y'}\cdot {x} = t} 1$$

We rewrite the sum as:

$$\sum\limits_{{y}}\sum\limits_{{x}: {y}\cdot {x} = t}\sum\limits_{{y'}:({y}-{y'})\cdot {x} = 0} 1 \geq \sum\limits_{{y}: y_1\in R^\times}\sum\limits_{\substack{{x}: {y}\cdot {x} = t,\\ x_2\in R^\times}}\sum\limits_{y': (y-y')\cdot x = 0} 1$$

For any fixed $x$ and $y$, as $x_2$ is a unit, for a fixed choice of $y'_1, y'_3, y'_4, \dots$, there is precisely one choice of $y'_2$ such that $(y_1-y'_1)x_1 + (y_2-y'_2)x_2 + \dots + (y_d - y'_d)x_d = 0$, so the innermost sum is $|R|^{d-1}$. For any fixed $y$, as $y_1$ is a unit, there are similarly precisely $|R^\times||R|^{d-2}$ possible choices of $x$, and finally there are $|R^\times||R|^{d-1}$ possible values of $y$, so all in all, we have that:

$$||A_t'\chi_0||_2^2 \geq |R^\times|^2|R|^{3d-4}$$

so that (recalling $||\chi_0||_2 = |R|^{d/2}$)

$$\frac{||A_t'\chi_0||_2}{||\chi_0||_2} \geq |R^\times||R|^{d - 2}$$.

In particular, if $R$ is a matrix ring, we have that $|R^\times|/|R| \geq \frac{1}{4}$ by lemma 2.6 of \cite{IoMPak}, and the bound becomes:

$$\frac{||A_t'\chi_0||_2}{||\chi_0||_2} \geq \frac{1}{4}|R|^{d-1}.$$

\end{proof}

This is a good enough lower bound on $\frac{||A_t'\chi_0||_2}{||\chi_0||_2}$ for all simple rings but one: if $R = \mathbb{F}_2$, there is only one unit, and for small values of $d$, the lower bound we have will not suffice. Fortunately, $\mathbb{F}_2$ is simple enough that we can compute $||A_{1}'\chi_0||_2$ exactly:
\begin{lemma}\label{BooleanRings}
    Let $R = \mathbb{F}_2$. Then 
    $$\frac{||A_{1}'\chi_0||_2}{||\chi_0||_2} = 2^{\frac{d}{2}-1}(2^d-1)^\frac{1}{2}.$$
\end{lemma}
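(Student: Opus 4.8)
The goal is an exact evaluation of $\|A_1'\chi_0\|_2$ over $R = \mathbb{F}_2$, where $\chi_0$ is the trivial character, so that $A_1'\chi_0(x)$ simply counts the number of $y \in \mathbb{F}_2^d$ with $y \cdot x = 1$. Write $N(x) = \#\{y : y\cdot x = 1\}$; then $\|A_1'\chi_0\|_2^2 = \sum_{x} N(x)^2$ and $\|\chi_0\|_2^2 = 2^d$, so it suffices to compute $\sum_x N(x)^2$ and check it equals $2^{d-2}(2^d-1)$.

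The plan is to compute $N(x)$ explicitly for each $x$. Over $\mathbb{F}_2$, the equation $y\cdot x = 1$ is a single $\mathbb{F}_2$-linear condition on $y$. If $x = 0$, there are no solutions, so $N(0) = 0$. If $x \neq 0$, the map $y \mapsto y\cdot x$ is a surjective linear functional $\mathbb{F}_2^d \to \mathbb{F}_2$, so its fibers each have size $2^{d-1}$; in particular $N(x) = 2^{d-1}$ for every nonzero $x$. There are $2^d - 1$ nonzero $x$, hence
$$\sum_x N(x)^2 = (2^d - 1)\cdot (2^{d-1})^2 = (2^d-1)\cdot 2^{2d-2}.$$
Dividing by $\|\chi_0\|_2^2 = 2^d$ and taking square roots gives
$$\frac{\|A_1'\chi_0\|_2}{\|\chi_0\|_2} = \left(\frac{(2^d-1)2^{2d-2}}{2^d}\right)^{1/2} = \left(2^{d-2}(2^d-1)\right)^{1/2} = 2^{\frac{d}{2}-1}(2^d-1)^{1/2},$$
which is the claimed identity.

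There is essentially no obstacle here: the only thing to be careful about is the edge case $x = 0$ (where $t = 1 \neq 0$ forces the fiber to be empty, which is consistent with the general statement that $0\cdot x \neq t$ for units $t$, already observed in the discussion after the definition of $A_t'$), and the bookkeeping that over $\mathbb{F}_2$ every nonzero $x$ gives a genuinely surjective — not just nonzero — functional, so all fibers have the uniform size $2^{d-1}$. Both points are immediate. One could alternatively run the same character-expansion computation as in the proof of Theorem \ref{finiteFieldsBound} without first passing to mean-zero functions, but the direct counting argument above is shorter and makes the exact constant transparent.
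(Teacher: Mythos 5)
Your proof is correct and reaches the same answer, but takes a cleaner intermediate route. The paper computes $A_1'\chi_0(x)$ by stratifying $x$ by its Hamming weight $k$: for $x$ with $k\geq 1$ ones, it counts the number of $y$ with $y\cdot x=1$ via the parity identity $\sum_{\ell\text{ odd}}\binom{k}{\ell}=2^{k-1}$, multiplies by $2^{d-k}$ free coordinates to get $2^{d-1}$, and then sums $\binom{d}{k}(2^{d-1})^2$ over $k$. You bypass all of this by observing that for any nonzero $x$, the map $y\mapsto y\cdot x$ is a nonzero (hence surjective) $\mathbb{F}_2$-linear functional, so every fiber has size $2^{d-1}$ immediately; then the count is $(2^d-1)(2^{d-1})^2$ with no binomial bookkeeping. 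Both arguments handle $x=0$ identically. Your version is shorter, makes the constant structurally transparent, and is the one that would generalize directly to $\mathbb{F}_q$; the paper's combinatorial count is specific to $\mathbb{F}_2$ and arguably more labor than the situation warrants, though perfectly correct.
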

\begin{proof}
    First we compute $A_{1}'\chi_0({x}).$ If ${x} = {0},$ then there are no ${y}$ such that $y\cdot x = 1$. Thus, assume $x$ contains precisely $k$ $1$'s, $k\geq 1$. To make ${y}\cdot {x} = 1,$ we must choose an odd number of the $k$ indices in which ${x}$ is nonzero to put a ``1'' in $y$, and we may assign the remaining $d-k$ indices of $y$ freely. The number of ways to choose an odd number of elements of a $k$-element set is $2^{k-1}$ by a classic bit of combinatorics:

\[ \begin{split}
        \sum\limits_{\substack{0\leq \ell\leq k\\ \ell\equiv 1\text{ (mod 2)}}} \binom{k}{\ell} &= \frac{1}{2}\sum\limits_{0\leq \ell \leq k}(1^\ell - (-1)^\ell)\binom{k}{\ell} \\
&= \frac{1}{2} \left(\sum\limits_{0\leq \ell\leq k}\binom{k}{\ell}1^\ell - \sum\limits_{0\leq \ell\leq k}\binom{k}{\ell}(-1)^\ell\right)  \\
&= \frac{1}{2}\left((1+1)^k - (1-1)^k\right)  \\
&= 2^{k-1}
    \end{split} \]

Thus, for a fixed choice of ${x}$ having $k\geq1$ $1$'s, there are exactly $2^{k-1}2^{d-k} = 2^{d-1}$ choices for ${y}$ so that ${y}\cdot{x} = 1,$ and for such an ${x}$, we have $(A_1'\chi_0)({x}) = \sum_{{y}\cdot{x} = 1}1 = 2^{d-1}.$ Now, there are exactly $\binom{d}{k}$ vectors ${x}$ containing exactly $k$ 1's, so that:

\[\begin{split}
    ||A_1'\chi_0||_2 &= \left(\sum\limits_{{x}}|(A_1'\chi_0)({x})|^2\right)^{1/2}\\
    &= \left(\sum_{1\leq k\leq d}\binom{d}{k}\left(2^{d-1}\right)^2\right)^{1/2}\\
    &= 2^{d-1}\left(\left(\sum\limits_{0\leq k\leq d}\binom{d}{k}\right) - 1\right)^{1/2}\\
    &= 2^{d-1}\sqrt{2^d-1}
\end{split}\]
Finally, as $||\chi_0||_2 = |R|^{d/2} = 2^{d/2},$ we find that:

$$\frac{||A_{1}'\chi_0||_2}{||\chi_0||_2} = 2^{\frac{d}{2}-1}(2^d-1)^\frac{1}{2}$$
\end{proof}

We are now able to extend our result to semisimple rings:
\begin{theorem}\label{semisimple}
    Fix a constant $C>0$, and an integer $d\geq 2$. There are only finitely many pairs $(R, t)$ with $R$ a finite semisimple ring and $t\in R^\times$ where $R$ is not a finite field and the $d$-dimensional $t$-incidence-Salem number of $R$ is at most $C$.
\end{theorem}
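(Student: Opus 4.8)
The plan is to combine the structure theorem with Lemmas \ref{generalLower}, \ref{Products}, \ref{trivialChar}, and \ref{BooleanRings}. Write $R\cong M_1\times\cdots\times M_k$ with $M_j=\mathrm{Mat}_{n_j\times n_j}(\mathbb{F}_{q_j})$ and $t=(t_1,\dots,t_k)$, $t_j\in M_j^{\times}$, and relabel so that $|M_1|\le\cdots\le|M_k|$. If $k=1$ then, since $R$ is not a field, $n_1\ge 2$, and Theorem \ref{matrixRings} already gives the conclusion; so I would assume $k\ge 2$. The idea is to test $A_t$ against the character $\chi=\chi_1\chi_0\cdots\chi_0$ of $R^d$ which is the nontrivial character $\chi_1$ of $M_1^d$ furnished by Lemma \ref{generalLower} on the first block and the trivial character $\chi_0$ on every remaining block. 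Since $\chi_1$ is nontrivial, $\chi\in W$, so the $t$-Incidence-Salem number of $R$ is at least $\|A_t\chi\|_2/(|R|^{(d-1)/2}\|\chi\|_2)$, and iterating Lemma \ref{Products} factors this quantity into the product of the corresponding ratios over the blocks $M_j$. Writing $\lambda(M)=\|A_{t'}'\chi_0\|_2/(|M|^{(d-1)/2}\|\chi_0\|_2)$ for the ``trivial-character ratio'' of a matrix ring $M$ over a finite field (with any $t'\in M^{\times}$), Lemma \ref{generalLower} bounds the $M_1$-factor below by $\tfrac12$, so the $t$-Incidence-Salem number of $R$ is at least $\tfrac12\prod_{j=2}^{k}\lambda(M_j)$.

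The technical core is two lower bounds for $\lambda(M)$, uniform in $M$ and in $d\ge 2$. First, I want a universal constant $\gamma_0>1$ with $\lambda(M)\ge\gamma_0$ for every such $M$: for $M=\mathbb{F}_2$ this is Lemma \ref{BooleanRings}, which gives $\lambda(\mathbb{F}_2)=\sqrt{(2^d-1)/2}\ge\sqrt{3/2}$; for $M=\mathbb{F}_q$ with $q\ge 3$ it follows from Lemma \ref{trivialChar}, which gives $\lambda(\mathbb{F}_q)\ge (q-1)q^{(d-3)/2}\ge 2/\sqrt{3}$; and for $n\ge 2$ I would argue directly, using that $y\cdot x=t'$ has exactly $|M|^{d-1}$ solutions whenever some coordinate of $x$ is a unit, so that $\|A_{t'}'\chi_0\|_2^2\ge\big(|M|^d-(|M|-|M^{\times}|)^d\big)|M|^{2d-2}$; with $|M^{\times}|\ge\tfrac14|M|$ (Lemma 2.6 of \cite{IoMPak}) and $|M|\ge 16$ this gives $\lambda(M)^2\ge (1-(3/4)^d)|M|^{d-1}\ge 7$. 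Second, I want $\lambda(M)$ to grow with $|M|$: Lemma \ref{trivialChar} together with the same unit-coordinate count for fields gives $\lambda(M)\ge\tfrac14|M|^{1/2}$ for every $M$ and every $d\ge 2$.

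Granting these, the rest is bookkeeping. If the $t$-Incidence-Salem number of $R$ is less than $C$, then $\tfrac12\gamma_0^{\,k-1}<C$, so $k$ is bounded in terms of $C$ and $\gamma_0$; and bounding all but one of the factors $\lambda(M_j)$ (for $2\le j\le k$) below by $\gamma_0>1$ gives $\lambda(M_j)<2C$ for each such $j$, whence $|M_j|<64C^2$ by the growth bound, and then $|M_1|\le|M_2|<64C^2$ as well. Hence $k$ and all the $|M_j|$, and therefore $|R|$, are bounded in terms of $C$ and $d$; as there are only finitely many finite rings of bounded order, and each has finitely many units, this gives the claimed finiteness. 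Together with the $k=1$ case handled by Theorem \ref{matrixRings}, this proves the statement.

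I expect the main obstacle to be making the estimates uniform in the number of factors $k$: putting a nontrivial character on every block would only yield the shrinking lower bound $2^{-k}$, while using the trivial character on every block is not permitted (the resulting character of $R^d$ must be nontrivial). The way around this is to spend exactly one factor of $\tfrac12$ on a single nontrivial block and to extract a quantity strictly larger than $1$ from each of the remaining (trivial-character) blocks. This is where the sharp forms of Lemmas \ref{trivialChar} and \ref{BooleanRings} enter, and why the borderline ring $\mathrm{Mat}_{2\times 2}(\mathbb{F}_2)$ with $d=2$---for which the cruder estimate $\lambda(M)\ge\tfrac14|M|^{(d-1)/2}$ gives exactly $1$---has to be handled by the direct solution count above.
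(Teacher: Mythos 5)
Your proposal is correct and follows essentially the same route as the paper: put a nontrivial character (from Lemma \ref{generalLower}) on one factor, the trivial character on the rest, factor the norm via Lemma \ref{Products}, and then show each trivial-character ratio exceeds a universal $\gamma_0>1$ (via Lemmas \ref{trivialChar} and \ref{BooleanRings}) while also growing with $|M_j|$, which together bound the number of factors and their sizes. The only differences are cosmetic: you pre-sort the simple factors by size rather than interchanging roles at the end, and you dispatch the borderline $n\ge 2$ case (including $\mathrm{Mat}_{2\times 2}(\mathbb{F}_2)$ at $d=2$) with a direct solution count over $x$ having a unit coordinate, whereas the paper splits on $|R_j|\ge 17$ and computes $|GL_2(\mathbb{F}_2)|=6$ by hand.
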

\begin{proof}
    Fix a ring $R$, and a unit $t\in R^\times$. As $R$ is semisimple, we may write 
    $$R = R_1\times R_2\times\dots\times R_m$$
    where each $R_j$ is simple, and thus is a matrix ring, i.e., of the form $\text{Mat}_{n_j\times n_j}(\mathbb{F}_{q_j})$ for some $n_j\geq 1$ and finite field $\mathbb{F}_{q_j}$ with $q_j$ elements. If $m = 1,$ then $R$ is in fact simple, and this is just theorem \ref{matrixRings}. Thus, suppose $m\geq 2$. Let $t = (t_1, t_2, \dots, t_m)$ be a decomposition of $t$ into elements of $R_1,\,R_2,\,\dots,\,R_m$. Let $\chi = \chi_1\prod_{2\leq j \leq m}\chi_{R_j, 0}$, where $\chi_1$ is the nontrivial character of $R_1^d$ given by lemma \ref{generalLower}, so that 
    $$\frac{||A_{t_1}'\chi_1||_2}{|R_1|^{\frac{d-1}{2}}||\chi_1||_2}\geq \frac{1}{2},$$
    and $\chi_{R_j, 0}$ is the trivial character of $R_j^d$. Clearly $\chi\in W$, so $A_t'\chi = A_t\chi.$ By repeated application of lemma \ref{Products}, we have that:

    \[
\begin{split}
    \frac{||A_t\chi||_2}{||\chi||_2} &= \frac{||A_{t_1}'\chi_1||_2||A_{t_2}'\chi_{R_2, 0}||_2\cdots||A_{t_m}'\chi_{R_m, 0}||_2}{|R|^{d/2}}\\
    &= \frac{||A_{t_1}'\chi_1||_2}{|R_1|^{d/2}}\frac{||A_{t_2}'\chi_{R_2, 0}||_2}{|R_2|^{d/2}}\cdots\frac{||A_{t_m}'\chi_{R_m, 0}||_2}{|R_m|^{d/2}}\\
    &=\frac{||A_{t_1}'\chi_1||_2}{||\chi_1||_2}\frac{||A_{t_2}'\chi_{R_2, 0}||_2}{||\chi_{R_2, 0}||_2}\cdots\frac{||A_{t_m}'\chi_{R_m, 0}||_2}{||\chi_{R_m, 0}||_2}.
\end{split}    
    \]
    In particular, the $t$-incidence-Salem number of $R$ is bounded below by
    \[
        \frac{||A_t\chi||_2}{|R|^{\frac{d-1}{2}}||\chi||_2} = \frac{||A_{t_1}'\chi_1||_2}{|R_1|^{\frac{d-1}{2}}||\chi_1||_2}\frac{||A_{t_2}'\chi_{R_2, 0}||_2}{|R_2|^{\frac{d-1}{2}}||\chi_{R_2, 0}||_2}\cdots\frac{||A_{t_m}'\chi_{R_m, 0}||_2}{|R_m|^{\frac{d-1}{2}}||\chi_{R_m, 0}||_2}
    \]

Applying the lower bound 
$$\frac{||A_{t_1}'\chi_1||_2}{|R_1|^{\frac{d-1}{2}}||\chi_1||_2}\geq \frac{1}{2}$$
along with our assumption on the $t$-incidence-Salem number of $R$, we have:
$$\frac{||A_{t_2}'\chi_{R_2, 0}||_2}{|R_2|^{\frac{d-1}{2}}||\chi_{R_2, 0}||_2}\cdots\frac{||A_{t_m}'\chi_{R_m, 0}||_2}{|R_m|^{\frac{d-1}{2}}||\chi_{R_m, 0}||_2} \leq 2C.$$

We now derive lower bounds for each factor
$$\frac{||A_{t_j}'\chi_{R_j, 0}||_2}{|R_j|^{\frac{d-1}{2}}||\chi_{R_j, 0}||_2}.$$

Suppose $R_j$ ($2\leq j \leq m$), $|R_j| \geq 17.$ Then by lemma \ref{trivialChar}, we have that 
$$\frac{||A_{t_j}'\chi_{R_j, 0}||_2}{|R_j|^{\frac{d-1}{2}}||\chi_{R_j, 0}||_2}\geq\frac{1}{4}|R_j|^\frac{d-1}{2} \geq \frac{\sqrt{17}}{4} > 1.$$

If $|R_j|<17$, then either $R_j$ is a field, or $R_j = \text{Mat}_{2\times2}(\mathbb{F}_2).$ In the latter case, one can count by hand that $|R_j^\times| = |GL_2(\mathbb{F}_2)| = 6$. Then by lemma \ref{trivialChar} we have:
$$\frac{||A_{t_j}'\chi_{R_j, 0}||_2}{|R_j|^{\frac{d-1}{2}}||\chi_{R_j, 0}||_2} \geq |R_j^\times||R_j|^{\frac{d-3}{2}} \geq \frac{6}{4} > \frac{\sqrt{17}}{4} > 1.$$
Suppose then $R_j$ is a field, so $|R_j^\times| = |R_j|-1.$ If $|R_j|\geq3,$ then by lemma \ref{trivialChar} we have:
$$\frac{||A_{t_j}'\chi_{R_j, 0}||_2}{|R_j|^{\frac{d-1}{2}}||\chi_{R_j, 0}||_2} \geq |R_j^\times||R_j|^{\frac{d-3}{2}} = (|R_j|-1)|R_j|^{\frac{d-3}{2}}\geq \sqrt{|R_j|} - \frac{1}{\sqrt{|R_j|}} \geq \frac{2\sqrt{3}}{3} > \frac{\sqrt{17}}{4}.$$
Finally, if $R_j = \mathbb{F}_2,$ then by lemma \ref{BooleanRings} we have:
$$\frac{||A_{t_j}'\chi_{R_j, 0}||_2}{|R_j|^{\frac{d-1}{2}}||\chi_{R_j, 0}||_2} = \frac{2^{\frac{d}{2}-1}(2^d-1)^\frac{1}{2}}{2^\frac{d-1}{2}} = \left(\frac{2^d-1}{2}\right)^\frac{1}{2}\geq \sqrt{\frac{3}{2}} > \frac{\sqrt{17}}{4}. $$

In particular, we have that
$$\left(\frac{\sqrt{17}}{4}\right)^{m-1}<2C,$$
so that $m$ is bounded. By the above case analysis, each factor is at least 1, whence for each $R_j$ we also have,

$$2C \geq \frac{||A_{t_j}'\chi_{R_j, 0}||_2}{|R_j|^{\frac{d-1}{2}}||\chi_{R_j, 0}||_2}\geq\frac{1}{4}|R_j|^\frac{d-1}{2}$$

so

$$|R_j| \leq (8C)^\frac{2}{d-1}.$$

Thus, there are only finitely many possible choices for each $R_j$ ($2\leq j\leq m$). Interchanging the roles of $R_1$ and $R_2$ in the above argument yields that there are only finitely many possible choices for $R_1$ as well, so that there are only finitely many possible non-simple semisimple rings $R$ whose $t$-incidence-Salem number is at most $C$. As each ring is finite, for each choice of $R$, there are only finitely many choices for $t$ as well, establishing the theorem.
\end{proof}
We also note the following fact, which is a corollary of the proof of theorem \ref{semisimple} and which will be used when we extend our result to the general case:
\begin{corollary}\label{lowerBoundSemisimple}
Let $R$ be a finite semisimple ring, and $t\in R^\times.$ Then there is a character $\chi$ of $R^d$ such that $$\frac{||A_t\chi||_2}{|R|^{\frac{d-1}{2}}||\chi||_2}\geq 1/2.$$ 
\end{corollary}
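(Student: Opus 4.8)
The plan is to package the computation already carried out inside the proof of Theorem \ref{semisimple}, stripping away the bookkeeping that was there only to extract finiteness. Write $R = R_1\times R_2\times\cdots\times R_m$ with each $R_j$ a matrix ring $\mathrm{Mat}_{n_j\times n_j}(\mathbb{F}_{q_j})$ (possibly a field, $n_j=1$), and decompose $t = (t_1,\dots,t_m)$ with $t_j\in R_j^\times$. If $m=1$ then $R$ is simple, hence a matrix ring, and the assertion is literally Lemma \ref{generalLower}; so from now on assume $m\geq 2$.

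For $m\geq 2$ I would reuse the test character from the proof of Theorem \ref{semisimple}: set $\chi = \chi_1\prod_{2\leq j\leq m}\chi_{R_j,0}$, where $\chi_1$ is the nontrivial character of $R_1^d$ supplied by Lemma \ref{generalLower} and $\chi_{R_j,0}$ is the trivial character of $R_j^d$ for $j\geq 2$. Then $\chi$ is nontrivial, so $\chi\in W$ and $A_t\chi = A_t'\chi$ (and likewise $A_{t_1}'\chi_1 = A_{t_1}\chi_1$, since $\chi_1$ is nontrivial). Repeated application of Lemma \ref{Products} gives the factorization
$$\frac{||A_t\chi||_2}{|R|^{(d-1)/2}||\chi||_2} = \frac{||A_{t_1}'\chi_1||_2}{|R_1|^{(d-1)/2}||\chi_1||_2}\prod_{j=2}^m\frac{||A_{t_j}'\chi_{R_j,0}||_2}{|R_j|^{(d-1)/2}||\chi_{R_j,0}||_2},$$
whose first factor is $\geq \tfrac12$ by Lemma \ref{generalLower}.

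It then remains only to observe that each factor indexed by $j\geq 2$ is at least $1$. This is exactly the case analysis performed in the proof of Theorem \ref{semisimple}: using Lemma \ref{trivialChar} when $|R_j|\geq 3$ (invoking Lemma 2.6 of \cite{IoMPak} for the unit count when $R_j$ is a genuine matrix ring) and Lemma \ref{BooleanRings} when $R_j=\mathbb{F}_2$, one finds that each such factor is $\geq\sqrt{17}/4$, $\geq 6/4$, $\geq 2\sqrt3/3$, or $\geq\bigl((2^d-1)/2\bigr)^{1/2}$ according to the type and size of $R_j$, and every one of these quantities exceeds $1$ for all $d\geq 2$. Multiplying through, the normalized ratio is $\geq\tfrac12$, which is the claim.

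There is essentially no obstacle here: the corollary is a direct repackaging of the computation in Theorem \ref{semisimple}, and the only point deserving a moment's care is to verify that the lower bounds on the trivial-character factors already hold at $d=2$ (the smallest allowed dimension) and do not degrade as $d$ grows — which they do not, since the relevant powers of $|R_j|$ and the quantity $\bigl((2^d-1)/2\bigr)^{1/2}$ are all nondecreasing in $d$.
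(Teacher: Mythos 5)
Your proposal is correct and takes essentially the same route as the paper: pick the test character $\chi = \chi_1\prod_{j\geq 2}\chi_{R_j,0}$ from the proof of Theorem \ref{semisimple}, factor via Lemma \ref{Products}, and combine the first-factor bound $\geq 1/2$ from Lemma \ref{generalLower} with the bound $\geq 1$ for each trivial-character factor. The only difference is that you make the $m=1$ case explicit (correctly falling back to Lemma \ref{generalLower} rather than Theorem \ref{matrixRings}, which is needed to cover the field case), whereas the paper simply points to ``the character produced in the course of the proof of Theorem \ref{semisimple}.''
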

\begin{proof}
    Let $\chi$ be the character produced by the construction in the proof of theorem \ref{semisimple}. Then we may write:

    $$\frac{||A_t\chi||_2}{|R|^{\frac{d-1}{2}}||\chi||_2}=\frac{||A_{t_1}'\chi_1||_2}{|R_1|^{\frac{d-1}{2}}||\chi_1||_2}\frac{||A_{t_2}'\chi_{R_2, 0}||_2}{|R_2|^{\frac{d-1}{2}}||\chi_{R_2, 0}||_2}\cdots\frac{||A_{t_m}'\chi_{R_m, 0}||_2}{|R_m|^{\frac{d-1}{2}}||\chi_{R_m, 0}||_2}$$

    where we have that 

    $$\frac{||A_{t_1}'\chi_1||_2}{|R_1|^{\frac{d-1}{2}}||\chi_1||_2}\geq \frac{1}{2}$$
    and each
    $$\frac{||A_{t_j}'\chi_{R_j, 0}||_2}{|R_j|^{\frac{d-1}{2}}||\chi_{R_j, 0}||_2}\geq 1$$
    by the proof of theorem \ref{semisimple}. The result follows.     
\end{proof}
\section{General Finite Rings}\label{Rings}
In order to extend our results to general rings, we will need to understand how the presence of a nontrivial Jacobson radical influences $||A_t||_2.$ The key bound is the following:
\begin{theorem}\label{JacobsonBound}
    Let $R$ be a finite ring, and let $J$ be its Jacobson radical. Let $t\in R^\times$. Let $\tilde{\chi}$ be a nontrivial character of $(R/J)^d$, and let $\chi$ be the pullback to $R^d$. Then

    $$\frac{||(A_t\chi)||_2}{||\chi||_2} \geq |J|^{d-1}\frac{||(A_{\tilde{t}}\tilde\chi)||_2}{||\tilde\chi||_2}$$
\end{theorem}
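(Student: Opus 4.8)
The plan is to prove the stronger pointwise identity
\[
A_t\chi(x) \;=\; |J|^{d-1}\,A_{\tilde t}\tilde\chi\big(\pi(x)\big)\qquad\text{for every }x\in R^d,
\]
where $\pi\colon R^d\to(R/J)^d$ is coordinatewise reduction and $\tilde t$ is the image of $t$ (a unit of $R/J$). Granting this, the theorem follows by bookkeeping: $\pi$ is a surjective group homomorphism with kernel $J^d$, so every fibre has exactly $|J|^d$ elements, whence $\|A_t\chi\|_2^2=\sum_{x}|A_t\chi(x)|^2=|J|^{d}\,|J|^{2d-2}\sum_{\bar x}|A_{\tilde t}\tilde\chi(\bar x)|^2=|J|^{3d-2}\|A_{\tilde t}\tilde\chi\|_2^2$; and $\|\chi\|_2=|R|^{d/2}=|J|^{d/2}\|\tilde\chi\|_2$, so taking square roots and dividing gives $\|A_t\chi\|_2/\|\chi\|_2=|J|^{d-1}\|A_{\tilde t}\tilde\chi\|_2/\|\tilde\chi\|_2$ — in fact with equality, which is more than the stated inequality. (Since $\pi$ is surjective, $\tilde\chi$ nontrivial forces $\chi$ nontrivial, so both are mean-zero and lie in the domains of the respective operators.)

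To prove the pointwise identity I would split on whether $\bar x:=\pi(x)$ generates $R/J$ as a left ideal. If it does not, then $\sum_i(R/J)\bar x_i$ is a proper left ideal, hence misses the unit $\tilde t$, so there is no $\bar y$ with $\bar y\cdot\bar x=\tilde t$ and $A_{\tilde t}\tilde\chi(\bar x)=0$; and if $x$ generated $R$ as a left ideal then applying $\pi$ would show $\bar x$ generates $R/J$, a contradiction, so $\sum_i Rx_i$ is a proper left ideal missing the unit $t$, the equation $y\cdot x=t$ has no solutions, and $A_t\chi(x)=0$ too. Both sides vanish.

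The substance is the case where $\bar x$ generates $R/J$. First I would upgrade this to: $x$ generates $R$ as a left ideal. Indeed $\sum_i a_i x_i\equiv 1\pmod J$ for some $a_i\in R$, say $\sum_i a_i x_i=1-j$ with $j\in J$; since the Jacobson radical of a finite ring is nilpotent, $1-j$ is a unit, so $1=\sum_i (1-j)^{-1}a_i x_i\in\sum_i Rx_i$. Consequently $J=JR=\sum_i (JR)x_i=\sum_i Jx_i$, so the additive map $\phi\colon J^d\to J$, $\delta\mapsto\delta\cdot x$, is surjective, and being a homomorphism of groups of orders $|J|^d$ and $|J|$ it has kernel of order $|J|^{d-1}$. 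Now $\pi$ maps $\{y:y\cdot x=t\}$ into $\{\bar y:\bar y\cdot\bar x=\tilde t\}$, and this map is surjective: given $\bar y_0$ with $\bar y_0\cdot\bar x=\tilde t$, lift it to any $y'\in R^d$; then $t-y'\cdot x\in J$, so $\exists\delta\in J^d$ with $\delta\cdot x=t-y'\cdot x$, and $y'+\delta$ solves $y\cdot x=t$ and lies over $\bar y_0$. Each fibre is a coset of $\ker\phi$ — if $y^\ast$ lies over $\bar y_0$, then $y$ does iff $y-y^\ast\in J^d$ and $(y-y^\ast)\cdot x=0$, i.e.\ $y-y^\ast\in\ker\phi$ — so every fibre has exactly $|J|^{d-1}$ elements. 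Hence
\[
A_t\chi(x)=\sum_{y:\,y\cdot x=t}\tilde\chi(\pi(y))=\sum_{\bar y:\,\bar y\cdot\bar x=\tilde t}\Big(\sum_{\substack{y:\,y\cdot x=t\\ \pi(y)=\bar y}}\tilde\chi(\bar y)\Big)=|J|^{d-1}\sum_{\bar y:\,\bar y\cdot\bar x=\tilde t}\tilde\chi(\bar y)=|J|^{d-1}A_{\tilde t}\tilde\chi(\bar x),
\]
which is the identity.

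The step I expect to be the crux is establishing that $\pi$ restricts to a surjection between the two solution sets with all fibres of size $|J|^{d-1}$; this is exactly the content of $J=\sum_i Jx_i$ (surjectivity of $\phi$), which rests on nilpotence of the Jacobson radical of a finite ring and on $J$ being two-sided. The vanishing case, the coset description of the fibres, and the closing norm computation are all routine.
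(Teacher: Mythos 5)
Your proof is correct, and in fact it establishes a slightly stronger result than the stated theorem: you obtain the exact pointwise identity $A_t\chi(x)=|J|^{d-1}A_{\tilde t}\tilde\chi(\pi(x))$, hence equality of normalized norms, where the paper only proves the inequality. The overall scaffolding is the same as the paper's (reduce mod $J$, split the sum according to the fiber of $\pi$, count fiber sizes, then do the $\ell^2$ bookkeeping), but your treatment of the crux is genuinely different and cleaner. The paper handles the fiber-size count in two pieces: a separate lifting lemma (its lemma on congruence conditions, proved by conjugating by $t$ and invoking quasi-regularity of $J$), and then a coset-counting argument that only yields $|\ker(p^d)\cap\ker\varphi|\ge|J|^{d-1}$ because it bounds the number of occupied cosets from above without showing they are all occupied. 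You replace both steps with the single observation that when $\sum_i Rx_i=R$ one also has $J=\sum_i Jx_i$, so the map $\phi\colon J^d\to J$, $\delta\mapsto\delta\cdot x$, is a surjective homomorphism of finite abelian groups and therefore has kernel of order exactly $|J|^{d-1}$. Surjectivity of $\phi$ simultaneously gives the lifting of solutions (adjust a preimage $y'$ by a suitable $\delta\in J^d$ to land exactly on $t$) and the exact fiber count; this is why you get equality where the paper settles for an inequality. One small remark: you invoke nilpotence of $J$ to see that $1-j$ is invertible, which is correct for finite rings but overkill; the standard characterization of the Jacobson radical (as cited in the paper's lemma on $1+rs$ being a unit) already gives invertibility and keeps the argument ring-theoretically minimal.
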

\begin{proof}
    To estimate $||A_t\chi||_2,$ we will estimate $|A_t\chi({x})|$ for various choices of ${x}$. Fix ${x}\in R^d$ and let ${\tilde{x}}$ be the reduction of ${x}$ to $(R/J)^d,$ $\tilde{t}$ be the reduction of $t$ to $R/J$, and write:

$$A_t\chi(x) = \sum\limits_{{y}: {y}\cdot {x} = t}\chi({y}) = \sum\limits_{{\tilde{y}}\cdot {\tilde{x}} = \tilde{t}}\tilde{\chi}({\tilde{y}})\sum\limits_{\substack{{y}: y \equiv {\tilde{y}} \text{ (mod }J)\\ \ {y}\cdot {x} = t}}1$$

so that we must estimate the number of ${y}$ such that ${y} \equiv {\tilde{y}}$ and ${y}\cdot {x} = t.$ To do so, we introduce four additive homomorphisms (which may not be ring homomorphisms, but which are homomorphisms of the underlying abelian groups): $\varphi_{{x}}: R^d\rightarrow R$ given by $\varphi_{{x}}({y}) = {y}\cdot {x}$, $\tilde{\varphi}_{{\tilde x}}: (R/J)^d \rightarrow R/J$ by $\tilde{\varphi}_{{\tilde x}}({\tilde{y}}) = {\tilde{y}}\cdot {\tilde{x}}$, the projection $p$ from $R$ to $R/J$, and the $d$-dimensional projection $p^d$ from $R^d$ to $(R/J)^d.$ When the dependence on ${x}$ and ${\tilde x}$ is clear, we denote $\varphi_{{x}}$ simply by $\varphi$ and $\tilde{\varphi}_{{\tilde x}}$ simply by $\tilde{\varphi}.$ Of course, for each ${x}$ the following diagram commutes:
\[ \begin{tikzcd}
R^d \arrow{r}{\varphi} \arrow[swap]{d}{p^d} & R \arrow{d}{p} \\%
(R/J)^d \arrow{r}{\tilde{\varphi}}& R/J
\end{tikzcd}
\]
We note that the image of $\varphi$ (resp. $\tilde{\varphi}$) is the (left) ideal generated by $x_1,\dots, x_d$ (resp. $\tilde{x_1}\dots, \tilde{x_d}$) in $R$ (resp. $R/J$), so that $\varphi$ (resp. $\tilde{\varphi}$) is surjective if and only if its image contains units in $R$ (resp. $R/J$). Hence, for a given ${x}$, either the sum defining $A_t$ is empty, or the map $\varphi$ is surjective. 

In order to proceed, we need a fact about the Jacobson radical, which may be found in \cite{JacobsonFacts}, proposition 4.1 or \cite{Rotman}, proposition C-2.12:
\begin{lemma}\label{JacobsonFacts}
Let $R$ be a ring, and let $J$ be its Jacobson radical, and let $s\in R$. Then the following are equivalent:
\begin{enumerate}
\item $s\in J.$
\item For any $r\in R$, $1+rs$ is a unit.
\item For any $r \in R$, $1 + sr$ is a unit.

\end{enumerate}
\end{lemma}
This lemma will allow us to relate $\varphi$ with $\tilde{\varphi}$. For example, we can quickly prove the following result:
\begin{lemma}
$\varphi$ is surjective if and only if $\tilde{\varphi}$ is.
\end{lemma}
\begin{proof}
Clearly, surjectivity of $\varphi$ implies surjectivity of $\tilde{\varphi}.$ To establish the reverse, we will use lemma \ref{JacobsonFacts}. If $\tilde\varphi$ is surjective, then there is a $y\in R^d$ such that $\varphi(y) \equiv 1 \text{ (mod }J)$. Then $y\cdot x = 1+s$ for some $s\in J$, and as $1+s$ is a unit, the image of $\varphi$ contains units, which implies its surjectivity. 
\end{proof}
We will actually need the following slightly stronger result:

\begin{lemma}\label{congruenceConditions}
Fix a unit $t$ of $R$, and let $\tilde{t}$ denote its image in $R/J$. Fix $x\in R^d$, and let $\tilde{x}$ be its image in $(R/J)^d.$ Fix ${\tilde{y}}\in (R/J)^d$ so that ${\tilde{y}}\cdot {\tilde{x}} = \tilde{t}.$ Then there exists ${y}\in R^d$ so that ${y}\equiv {\tilde{y}}$ (mod $J$), and ${y} \cdot {x}= t$.
\end{lemma}
\begin{proof}
Fix a preimage ${y}'$ of ${\tilde{y}}.$ We note that ${y}'\cdot {x} = t + s$ for some $s\in J$, so that $(t^{-1}{y}')\cdot {x} = 1 + t^{-1}s.$  By lemma \ref{JacobsonFacts}, $1 + t^{-1}s$ is a unit, so that $(t(1+t^{-1}s)^{-1}t^{-1}{y}')\cdot x = t;$ we let ${y} = t(1+t^{-1}s)^{-1}t^{-1}{y}'.$ 

By construction, ${y}\cdot {x} = t;$ we must only show ${y}\equiv {\tilde{y}}$. $t(1+t^{-1}s)t^{-1} = (t+s)t^{-1} = 1+st^{-1},$ so that $(t(1+t^{-1}s)^{-1}t^{-1})^{-1} \equiv 1$ (mod $J$); thus $t(1+t^{-1}s)^{-1}t^{-1} \equiv 1$ (mod $J$), so that ${y}' \equiv t(1+t^{-1}s)^{-1}t^{-1}{y}' = {y}$ (mod $J$).
\end{proof}
We now relate the above commutative diagram to the estimate we must make:
\begin{lemma}
If ${x}\in R^d$ is chosen so that the sum defining $A_t\chi(x)$ is nonempty, then we have that:

$$A_{t}\chi({x}) = |\ker(p^d)\cap\ker(\varphi)| A_{\tilde{t}}\tilde{\chi}({\tilde{x}}).$$
\end{lemma}
\begin{proof}
Recall that 
$$A_t\chi(x) = \sum\limits_{{\tilde{y}}\cdot {\tilde{x}} = \tilde{t}}\tilde{\chi}({\tilde{y}})\sum\limits_{\substack{{y}: y \equiv {\tilde{y}} \text{ (mod }J)\\ \ {y}\cdot {x} = t}}1.$$

We must show that when $x$ is chosen so that the sum defining $A_t$ is not empty,
$$\sum\limits_{\substack{{y}: {y} \equiv \tilde{y} \text{ (mod }J) \\ {y}\cdot {x} = t}}1 = |\ker(p^d)\cap\ker(\varphi)|.$$
The sum defining $A_t$ is nonempty if and only if $\varphi$ is surjective, if and only if $\tilde{\varphi}$ is surjective. For such values of $x$, we have by lemma \ref{congruenceConditions} that for each ${\tilde{y}}$ with ${\tilde{y}}\cdot{\tilde{x}} = \tilde{t},$ the sum

$$\sum\limits_{\substack{{y}: {y} \equiv {\tilde{y}} \text{ (mod }J) \\ {y}\cdot {x} = t}}1$$

is nonempty. Further, given ${y}$ and ${y}'$ both congruent to $\tilde{y}$ (mod $J^d$) and such that ${y}\cdot {x} = t$ and ${y}' \cdot x = t$, we have that ${y}-{y}'\in J^d$ and $\varphi({y} - {y}') = 0$, and given any ${r}\in \ker(p^d)\cap\ker(\varphi)$, ${y} + {r}$ is congruent to $\tilde{y}$ (mod $J^d$) and $({y}+{r})\cdot x = t$, so that 

$$\sum\limits_{\substack{{y}: {y} \equiv \tilde{y} \text{ (mod }J) \\ {y}\cdot {x} = t}}1 = |\ker(p^d)\cap\ker(\varphi)|.$$
\end{proof}
It follows that now we must only compute $|\ker(p^d)\cap\ker(\varphi)|.$ In fact, since we only need a lower bound on $||A_t||_2$, we will only need the following lower bound on $|\ker(p^d)\cap\ker(\varphi)|$:
\begin{lemma}
Suppose that $x$ is chosen so that $\varphi$ (equivalently, $\tilde{\varphi}$) is surjective. Then 

$$|\ker(p^d)\cap\ker(\varphi)| \geq |J|^{d-1}.$$
\end{lemma} 
\begin{proof}
Pick $r_j$ for $j = 1, 2,\dots, |R|$ so that $r_j + \ker(\varphi)$ are the distinct (additive) cosets of $\ker(\varphi)$ in $R^d$. Then 
$$R^d = \bigcup\limits_j\left(r_j + \ker(\varphi)\right),$$ 
so that 
$$\ker(p^d) = \bigcup_{j} \left(\ker(p^d)\cap \left(r_j + \ker(\varphi)\right)\right).$$

For each $r_j$, if $\ker(p^d)\cap \left(r_j + \ker(\varphi)\right) \neq \emptyset,$ we fix 
$$s_j\in \ker(p^d)\cap (r_j + \ker(\varphi))$$
and write $r_j + \ker(\varphi) = s_j + \ker(\varphi),$ so that 
$$\ker(p^d)\cap (r_j + \ker(\varphi)) = \ker(p^d)\cap (s_j + \ker(\varphi)) = s_j + \ker(p^d)\cap \ker(\varphi).$$
In particular, for all $j$ such that $\ker(p^d)\cap (r_j + \ker(\varphi)) \neq \emptyset$, we have that 
$$|\ker(p^d)\cap (r_j + \ker(\varphi))| = |\ker(p^d)\cap \ker(\varphi)|.$$
Let $N_x$ denote the number of $r_j$ such that $\ker(p^d)\cap\left( r_j + \ker(\varphi)\right) \neq \emptyset.$ It follows that 
$$|\ker(p^d)\cap\ker(\varphi)| = \frac{|J|^d}{N_{{x}}}.$$
We estimate $N_x$ as follows: for a fixed $r_j$, we have 
$$p\circ\varphi(r_j + \ker(\varphi)) = p(\varphi(r_j) + {0}) = p(\varphi(r_j)).$$
Now, $p\circ\varphi = \tilde\varphi\circ p^d$, so for any 
$${y}\in r_j + \ker(\varphi),$$
we have $\tilde\varphi\circ p^d({y}) = p\circ\varphi(r_j).$ If $\ker(p^d)\cap \left(r_j + \ker(\varphi)\right)$ is nonempty, we can pick 
$$y \in \ker(p^d)\cap \left(r_j + \ker(\varphi)\right).$$
Since $p^d(y) = 0$, we have that $p\circ \varphi(r_j) = 0.$ Thus, unless $p\circ\varphi(r_j) = 0,$ we have that $\ker(p^d)\cap\left( r_j + \ker(\varphi)\right) = \emptyset,$ so that clearly
$$N_x\leq \#\{r_j|p\circ\varphi(r_j) = 0\}.$$ 
If $p\circ\varphi(r_j) = 0,$ $\varphi(r_j)\in J$, so that by the first isomorphism theorem, exactly $|J|$ of the $r_j$ have the property that $p\circ \varphi(r_j) = 0.$  It follows that:

$$N_x \leq |J|,$$

from which we have
$$|\ker(p^d)\cap\ker(\varphi)| \geq |J|^{d-1}.$$
\end{proof}
We may now finally compute $||(A_t\chi)||_2,$ using $\varphi_{{x}}$ to keep track of the dependence of the map $\varphi$ on the choice of ${x},$ while keeping in mind that each ${\tilde{x}}\in (R/J)^d$ has precisely $|J|^d$ ${x}$'s in $R^d$ that reduce to it modulo $J$: 
\[\begin{split}
    ||(A_t\chi)||_2 &= \left(\sum\limits_{{x}\in R^d}\left\lvert(A_t\chi)({x})\right\rvert^2\right)^{1/2}\\
    &=\left(\Slim{\tilde{x}\in(R/J)^d}\Slim{\substack{x\in R^d\\x\equiv \tilde{x}\text{ (mod } J)}}\left[|\ker(p^d)\cap\ker(\varphi_{{x}})|\Abs{(A_{\tilde t}\tilde\chi)({\tilde{x}})}\right]^2\right)^{1/2}\\
    &\geq \left(\Slim{\tilde{x}\in(R/J)^d}\Slim{\substack{x\in R^d\\x\equiv \tilde{x}\text{ (mod } J)}}\left[|J|^{d-1}\Abs{(A_{\tilde{t}}\tilde\chi)({\tilde{x}})}\right]^2\right)^{1/2}\\
    &=\left(|J|^d\sum\limits_{{\tilde x}\in (R/J)^d}\left[|J|^{d-1}\Abs{(A_{\tilde{t}}\tilde\chi)({\tilde{x}})}\right]^2\right)^{1/2}\\
    &= |J|^{d/2}|J|^{d-1}||A_{\tilde{t}}\tilde\chi||_2.
\end{split}\]
In particular, we have
$$\frac{||(A_t\chi)||_2}{||\chi||_2} = \frac{||(A_t\chi)||_2}{|R|^{d/2}} \geq \frac{|J|^{d/2}|J|^{d-1}||A_{\tilde{t}}\tilde\chi||_2}{|J|^{d/2}|R/J|^{d/2}} = |J|^{d-1}\frac{||(A_{\tilde{t}}\tilde\chi)||_2}{||\tilde\chi||_2}.$$
This completes the proof of theorem \ref{JacobsonBound}.
\end{proof}
We are finally in position to prove our theorem for general finite rings.
\begin{theorem}\label{GeneralCaseDots}
    Fix a constant $C>0$, and an integer $d\geq 2$. There are only finitely many pairs $(R, t)$ with $R$ a finite ring and $t\in R^\times$ where $R$ is not a finite field and the $d$-dimensional $t$-incidence-Salem number of $R$ is at most $C$.
\end{theorem}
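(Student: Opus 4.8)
The plan is to deduce from the two hypotheses --- that $R$ is not a finite field, and that the $d$-dimensional $t$-Incidence-Salem number of $R$ is less than $C$ --- that $|R|$ is bounded in terms of $C$ and $d$; since there are only finitely many finite rings of a given order and only finitely many units in a finite ring, the theorem follows. Write $J$ for the Jacobson radical of $R$, so that $R/J$ is semisimple, and let $\tilde{t}$ be the image of $t$ in $R/J$, a unit. The starting point is that the proof of theorem \ref{JacobsonBound} uses only that the pulled-back character is constant on the additive cosets of $J^d$, and so applies verbatim with that character replaced by $\tilde{g}\circ p^d$ for any mean-zero function $\tilde{g}$ on $(R/J)^d$; as $||\tilde{g}\circ p^d||_2 = |J|^{d/2}||\tilde{g}||_2$ and $|R| = |J|\,|R/J|$, taking the supremum over such $\tilde{g}$ gives that the $t$-Incidence-Salem number of $R$ is at least $|J|^{(d-1)/2}$ times the $\tilde{t}$-Incidence-Salem number of $R/J$:
$$\frac{||A_t||_2}{|R|^{\frac{d-1}{2}}}\ \geq\ |J|^{\frac{d-1}{2}}\cdot\frac{||A_{\tilde{t}}||_2}{|R/J|^{\frac{d-1}{2}}}.$$
By corollary \ref{lowerBoundSemisimple}, the $\tilde{t}$-Incidence-Salem number of the semisimple ring $R/J$ is at least $\tfrac{1}{2}$, so the $t$-Incidence-Salem number of $R$ is at least $\tfrac{1}{2}|J|^{(d-1)/2}$; hence the hypothesis forces $|J| < (2C)^{2/(d-1)}$, and in particular $|J|$ is bounded.

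Suppose first that $R/J$ is not a field. Since $|J| \geq 1$, the displayed inequality shows that the $\tilde{t}$-Incidence-Salem number of $R/J$ is at most that of $R$, hence less than $C$. As $R/J$ is a semisimple ring that is not a field, theorem \ref{semisimple} leaves only finitely many possibilities for the pair $(R/J, \tilde{t})$, so $|R/J|$ is bounded in terms of $C$ and $d$; combined with the bound on $|J|$, this bounds $|R| = |J|\,|R/J|$.

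Suppose instead that $R/J$ is a field. As $R$ is not a field, this forces $R$ to be a finite local ring with $J \neq 0$. Here the quotient is ``innocent'': being a finite field, $R/J$ has $\tilde{t}$-Incidence-Salem number at most $\sqrt{2}$ by theorem \ref{finiteFieldsBound}, so the displayed inequality constrains $|J|$ but a priori says nothing about $|R/J|$. The additional input is an elementary structural fact about finite local rings: $J$ is nilpotent, so $J \neq J^2$, and $J/J^2$ is therefore a nonzero left module over the field $R/J$ (since $J$ annihilates it), whence $|R/J| \leq |J/J^2| \leq |J|$. Consequently $|R| = |J|\,|R/J| \leq |J|^2$, and since $|J| < (2C)^{2/(d-1)}$ we conclude $|R| < (2C)^{4/(d-1)}$.

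In either case $|R|$ is bounded by a quantity depending only on $C$ and $d$ (the case $J = 0$ being already covered by theorem \ref{semisimple}), so only finitely many pairs $(R, t)$ can satisfy the hypotheses. The step that requires a genuinely new idea beyond the semisimple case is the local case: because a finite field exhibits true square-root cancellation, theorem \ref{JacobsonBound} on its own pins down only the size of the radical, and one must separately rule out a finite local ring with a tiny nilpotent radical sitting over an enormous residue field --- which is exactly what the inequality $|R/J| \leq |J|$, valid because $J/J^2$ is a vector space over the residue field $R/J$, accomplishes.
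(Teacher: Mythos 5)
Your proof is correct and follows essentially the same structure as the paper's: bound $|J|$ via theorem \ref{JacobsonBound} and corollary \ref{lowerBoundSemisimple}, reduce to the semisimple case via theorem \ref{semisimple} when $R/J$ is not a field, and dispose of the local case using that $J/J^2$ is a nonzero $R/J$-vector space (the paper phrases this via Nakayama's lemma and concludes ``$q$ divides $|J|$''; you phrase it via nilpotence of $J$ and conclude $|R/J| \le |J|$ --- same estimate). The one presentational difference is that you upgrade theorem \ref{JacobsonBound} from a statement about lifted characters to a genuine inequality between the operator norms of $A_t$ and $A_{\tilde t}$, by noting the proof only uses that the lifted function is constant on cosets of $J^d$; this is a valid and slightly cleaner observation, but the paper achieves the same effect by observing that the bound for all lifted nontrivial characters is exactly what the proof of theorem \ref{semisimple} consumes.
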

\begin{proof}
    Let $R$ be a finite ring, $t\in R^\times$, and assume that the $t$-incidence-Salem number of $R$ is at most $C$. Let $J$ be the Jacobson radical of $R$. If $|J| = 1,$ then $R$ is semisimple, and this is just theorem \ref{semisimple}. Suppose then that $J$ is nontrivial. Let $\tilde \chi$ be a nontrivial character of $(R/J)^d,$ and let $\chi$ be its pullback to $R^d.$ By theorem \ref{JacobsonBound}, we have:

$$\frac{||(A_t\chi)||_2}{|R|^{\frac{d-1}{2}}||\chi||_2} \geq \frac{|J|^{d-1}||(A_{\tilde{t}}\tilde\chi)||_2}{|J|^\frac{d-1}{2}|R/J|^\frac{d-1}{2}||\tilde\chi||_2} = |J|^\frac{d-1}{2}\frac{||(A_{\tilde{t}}\tilde\chi)||_2}{|R/J|^\frac{d-1}{2}||\tilde\chi||_2}.$$

    In particular, since the $t$-incidence-Salem number of $R$ is at most $C$, we have that:

    $$|J|^\frac{d-1}{2}\frac{||(A_{\tilde{t}}\tilde\chi)||_2}{|R/J|^\frac{d-1}{2}||\tilde\chi||_2}\leq C.$$

    First, choosing $\tilde\chi$ to be the character from corollary \ref{lowerBoundSemisimple} we have that
    $$\frac{||(A_{\tilde{t}}\tilde\chi)||_2}{|R/J|^\frac{d-1}{2}||\tilde\chi||_2}\geq\frac{1}{2},$$
    so that $|J|\leq (2C)^\frac{2}{d-1}$ is bounded. 
    
    As $|J|\geq 1,$ we also have that for all nontrivial characters $\tilde\chi$ of $(R/J)^d,$

    $$\frac{||(A_{\tilde{t}}\tilde\chi)||_2}{|R/J|^\frac{d-1}{2}||\tilde\chi||_2}\leq C.$$

    As $R/J$ is semisimple, by the proof of theorem \ref{semisimple}, there are only finitely many possibilities for $R/J$ other than a field. Coupled with the bound on $|J|$, we have shown that there are only finitely many finite rings $R$ with $t$-incidence-Salem number at most $C$ and semisimple quotient $R/J$ not a field.

    Now, suppose $R/J = \mathbb{F}_q.$ We shall show that for $q$ large enough, there can be no finite ring $R$ with nontrivial Jacobson radical $J$ such that $|J|\leq (2C)^\frac{2}{d-1}$. This follows from a quick application of Nakayama's lemma (\cite{Rotman}, lemma C-2.8): unless $J = 0$, $J/J^2\neq 0,$ and hence $J/J^2$ is a nontrivial $R/J = \mathbb{F}_q$ vector space. Then $q$ divides $|J|;$ for $q>(2C)^\frac{2}{d-1}$ this will be impossible. Thus, for rings with nontrivial Jacobson radical, $q$ is bounded; bounded $|J|$ and bounded $q$ means bounded $|R| = q|J|$, and only finitely many possibilities for $R$ and $t.$
\end{proof}

\section{Acknowledgments}
The author is grateful to Professors Alex Iosevich and Azita Mayeli for introducing him to and contextualizing this problem, and for their support in writing this paper. The author completed this work while partially supported by a Graduate Center Fellowship at the CUNY Graduate Center.

\end{document}